\def\aff{\mathop{\rm aff}\nolimits}
\newtheorem{thm}{Theorem}[section]
\newtheorem{lem}{Lemma}[section]
\newtheorem{coro}{Corollary}[section]
\newtheorem{defn}{Definition}[section]
\newtheorem{exam}{Example}[section]
\numberwithin{equation}{section}
\newtheorem{remark}{Remark}[section]
\newcommand{\bD}{{\mathcal{D}}}
\begin{document}

\title[A dimension drop phenomenon of fractal cubes]{A dimension drop phenomenon of fractal cubes}

\author{Liang-yi Huang} \address{College of Computer, Beijing Institute of Technology, Beijing, 100080, China
}
\email{liangyihuang@bit.edu.cn}

  \author{Hui Rao$^*$} \address{Department of Mathematics and Statistics, Central China Normal University, Wuhan, 430079, China
} \email{hrao@mail.ccnu.edu.cn
 }

\date{October 12, 2020}
\thanks {The work is supported by NSFS Nos. 11971195 and 11601172.}

\thanks{{\bf 2000 Mathematics Subject Classification:}  28A80,26A16\\
 {\indent\bf Key words and phrases:}\ fractal cube, connected component, topological Hausdorff dimension
}

\thanks{* The correspondence author.}

\maketitle

\begin{abstract}
Let $E$ be a metric space. We introduce a notion of connectedness index of $E$, which is the Hausdorff dimension of the union of non-trivial connected components of $E$. We show that the connectedness index of a fractal cube $E$ is strictly less than the Hausdorff dimension of $E$ provided that $E$ possesses a trivial connected component. Hence the connectedness index is a new Lipschitz invariant. Moreover, we investigate the relation between the connectedness index and topological Hausdorff dimension.
\end{abstract}

\section{\textbf{Introduction}}

An \emph{iterated function system} (IFS) is a family of contractions $\{\varphi_j\}_{j=1}^N$  on $\mathbb{R}^{d}$, and the \emph{attractor} of  the IFS is the unique nonempty compact set $K$ satisfying $K=\bigcup_{j=1}^N\varphi_j(K)$, and it is called a \emph{self-similar set} \cite{JEH81}. Let $n\ge 2$ and let $\bD=\{d_1,\cdots,d_N\}\subset\{0,1,\dots,n-1\}^d$, which we call a digit set. Denote by $\#\bD:=N$ the \emph{cardinality} of $\bD$. Then $n$ and $\bD$ determine an IFS $\{\varphi_j(z)=\frac{1}{n}(z+d_j)\}_{j=1}^N$, whose attractor $E=E(n,\bD)$ satisfies the set equation
\begin{equation}\label{fractalcube}
E=\frac{1}{n}(E+\bD).
\end{equation}
We call $E$ a \emph{fractal cube}\cite{XX10}, especially, when $d=2$, we call $E$ a \emph{fractal square}\cite{LLR13}.

There are some works on topological and metric properties of fractal cubes. Whyburn \cite{Why58} studied the homeomorphism classification, Bonk and Merenkov \cite{Bonk13} studied the quasi-symmetric classification. Lau, Luo and Rao \cite{LLR13} studied when a fractal square is totally disconnected. Xi and Xiong \cite{XX10} gave a complete classification of Lipschitz equivalence of fractal cubes which are totally disconnected. Recently, the studies of \cite{YZ18,RuanWang17} focus on the the Lipschitz equivalence of fractal squares which are not totally disconnected.

Topological Hausdorff dimension is a new fractal dimension introduced by Buczolich and Elekes \cite{BBE15}. It is shown in \cite{BBE15} that for any set $K$ we always have $\dim_{tH}K\le\dim_HK$, where $\dim_{tH}$ and $\dim_H$ denote the topological Hausdorff dimension and Hausdorff dimension respectively. Ma and Zhang \cite{MZ20} calculated topological Hausdorff dimensions of a class of fractal squares.

Let $K$ be a metric space. A point $x\in K$ is called \emph{a trivial point} of $K$ if $\{x\}$ is a connected component of $K$. Let $\Lambda(K)$ be the collection of trivial points in $K$. Denote
\begin{equation}\label{cH}
\mathcal{I}_c(K):=\dim_HK\setminus\Lambda(K),
\end{equation}
and we call it the \emph{connectedness index} of $K$. It is obvious that $\mathcal{I}_c(K)\le\dim_HK$. Clearly, the connectedness index is a Lipschitz invariant.
The main results of the present paper are as follows.

\begin{thm}\label{dimdecrease}
Let $E=E(n,\bD)$ be a $d$-dimensional fractal cube. If $E$ has a trivial point, then
$\mathcal{I}_c(E)<\dim_HE$.
\end{thm}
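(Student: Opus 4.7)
\medskip
\noindent\textbf{Proof plan.} Since the fractal cube IFS satisfies the open set condition, $\dim_H E = s := \log N/\log n$. Abbreviate $F := E\setminus\Lambda(E)$; the goal is $\dim_H F < s$. The starting point is the topological fact that, in the compact metric space $E$, the component of $x_0$ being $\{x_0\}$ is equivalent to $x_0$ having clopen neighbourhoods of arbitrarily small diameter.

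I would first translate this into IFS data. For each $k\ge 1$, form the intersection graph $G_k$ on $\{1,\dots,N\}^k$ whose edges are pairs $\omega,\tau$ with $\varphi_\omega(E)\cap\varphi_\tau(E)\ne\emptyset$; the union of cylinders over a connected component of $G_k$ is a clopen subset of $E$ (a \emph{cluster}). Let $U_k(x_0)$ denote the cluster containing $x_0$. A short descent argument gives the nesting $U_{k+1}(x_0)\subseteq U_k(x_0)$, and every clopen neighbourhood of $x_0$ eventually swallows $U_k(x_0)$ because cylinders of sufficiently small diameter cannot straddle the gap between a clopen set and its complement. Consequently $\bigcap_k U_k(x_0)=\{x_0\}$ and $\operatorname{diam} U_k(x_0)\to 0$, so for some $k_0$ there is a nonempty proper subset $\mathcal{C}\subsetneq\{1,\dots,N\}^{k_0}$ with $M:=|\mathcal{C}|<N^{k_0}$ such that $V:=\bigcup_{\omega\in\mathcal{C}}\varphi_\omega(E)$ is clopen in $E$. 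Because $V$ and $E\setminus V$ are both clopen, every connected component of $E$ lies entirely on one side, and by self-similarity each cylinder $\varphi_\tau(E)$ carries the analogous internal clopen decomposition $\varphi_\tau(V)\sqcup\varphi_\tau(E\setminus V)$; whenever $\varphi_\tau(E)$ is ``interior'' enough that this decomposition descends to a clopen decomposition of $E$, the same dichotomy applies inside it. Tracking the ``neighbour-type'' of the current cylinder by a finite automaton---possible because $\bD\subseteq\{0,\dots,n-1\}^d$ admits only finitely many boundary-incidence patterns---I would extract a proper subshift $\Sigma\subsetneq\{1,\dots,N\}^{\mathbb{N}}$ with $F\subseteq\pi(\Sigma)$, where $\pi$ is the natural coding map. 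The strict inequality $M<N^{k_0}$ would then translate into an entropy gap $h(\Sigma)<\log N$, and a Moran-type dimension formula for such sub-systems would yield $\dim_H F\le h(\Sigma)/\log n<s$.

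The main obstacle is propagating the local ``isolated cluster'' information faithfully across scales: for a generic $\tau$, $\varphi_\tau(E)$ shares boundary with other level-$|\tau|$ cylinders, so $\varphi_\tau(V)$ need not be clopen in $E$. Overcoming this requires combining (i) the finiteness of neighbour-types, which lets the whole analysis be encoded by a finite directed graph of type transitions, with (ii) a pigeonhole-type estimate showing that at every sufficiently deep level a positive density of cylinders are interior, so that excluding the isolated-cluster patterns costs a uniformly positive amount of entropy rather than vanishingly little. Making this quantitative gap explicit in terms of the ratio $M/N^{k_0}<1$ and feeding it into the Moran-type formula is the crux of the argument.
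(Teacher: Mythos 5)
Your overall strategy coincides with the paper's: locate a clopen ``cluster'' of cylinders witnessing the trivial point, delete the corresponding letters, and cover the non-trivial points by a sub-system on fewer symbols to force a dimension drop. The first half of your argument is sound: in the compact space $E$ components equal quasi-components, the clusters $U_k(x_0)$ are nested with intersection $\{x_0\}$, and hence some level-$k_0$ cluster $V=\bigcup_{\omega\in\mathcal{C}}\varphi_\omega(E)$ with $\mathcal{C}\subsetneq\Sigma^{k_0}$ is clopen in $E$. The final covering step would also go through (the paper does it more simply, covering the non-trivial points by countably many Lipschitz copies of the fractal cube on the reduced digit set, rather than invoking subshift entropy).

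The gap is exactly the obstacle you name and then defer. The clopen decomposition $V\sqcup(E\setminus V)$ propagates to $\varphi_\tau(V)\sqcup\varphi_\tau(E\setminus V)$ inside $E$ only if $V$ avoids $\partial[0,1]^d$; if the trivial point $x_0$ (hence possibly its whole cluster) lies on a face of the cube, then $\varphi_\tau(V)$ can be glued to neighbouring level-$|\tau|$ cells, it fails to be clopen in $E$, and hitting the pattern $\mathcal{C}$ infinitely often in a coding no longer certifies triviality. Your proposed remedies do not close this: a positive density of ``interior'' cylinders at deep levels says nothing about whether the particular cluster $V$ can be relocated off the boundary, and the neighbour-type automaton is asserted rather than constructed. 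This relocation is precisely the paper's main technical content (Theorem \ref{hasisland}: $E$ has a trivial point if and only if some $E_k$ has a $k$-th island). It is proved by an induction on the dimension $r$ of the containing face of the trivial point: Lemmas \ref{dim-increase} and \ref{another-z1}, via a word $\omega^*$ chosen extremal for certain coordinate sums, produce from a trivial point on an $r$-face another trivial point $\varphi_{\omega^*}(z_0)$ whose containing face has dimension at least $r+1$, eventually yielding a trivial point in $(0,1)^d$ and then, by a Hausdorff-limit compactness argument, an island. Without an argument of this kind your proof is incomplete. You would also need the paper's separate reduction for the degenerate case $\dim\aff(E)<d$, where no cell can avoid $\partial[0,1]^d$ at all and the statement is obtained by projecting to a lower-dimensional fractal cube.
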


However, Theorem \ref{dimdecrease} is not valid for general self-similar sets, even if the self-similar sets satisfy the open set condition.

\begin{exam}
\emph{Let $Q=\{0\}\cup\left(\bigcup\limits_{k=0}^\infty[\frac{1}{2^{2k+1}},
\frac{1}{2^{2k}}]\right)$. Observe that
$Q=\frac{Q}{4}\cup[\frac{1}{2},1]$ and $\frac{Q}{2}\cup Q=[0,1]$.
Then $Q$ is a self-similar set satisfying the equation
$$
Q=\frac{Q}{4}\cup\left(\frac{Q}{4}+\frac{1}{2}\right)\cup
\left(\frac{Q}{2}+\frac{1}{2}\right).
$$
The set $Q$ has only one trivial point, that is $0$. Therefore, $\mathcal{I}_c(Q)=\dim_HQ=1$. Figure \ref{Q'} illustrates $Q'$, a two dimensional generalization of $Q$. Similarly, $Q'$ is a self-similar set, and the unique trivial point of $Q'$ is $\mathbf{0}$.}

\begin{figure}[H]
  \centering
  \includegraphics[width=5 cm]{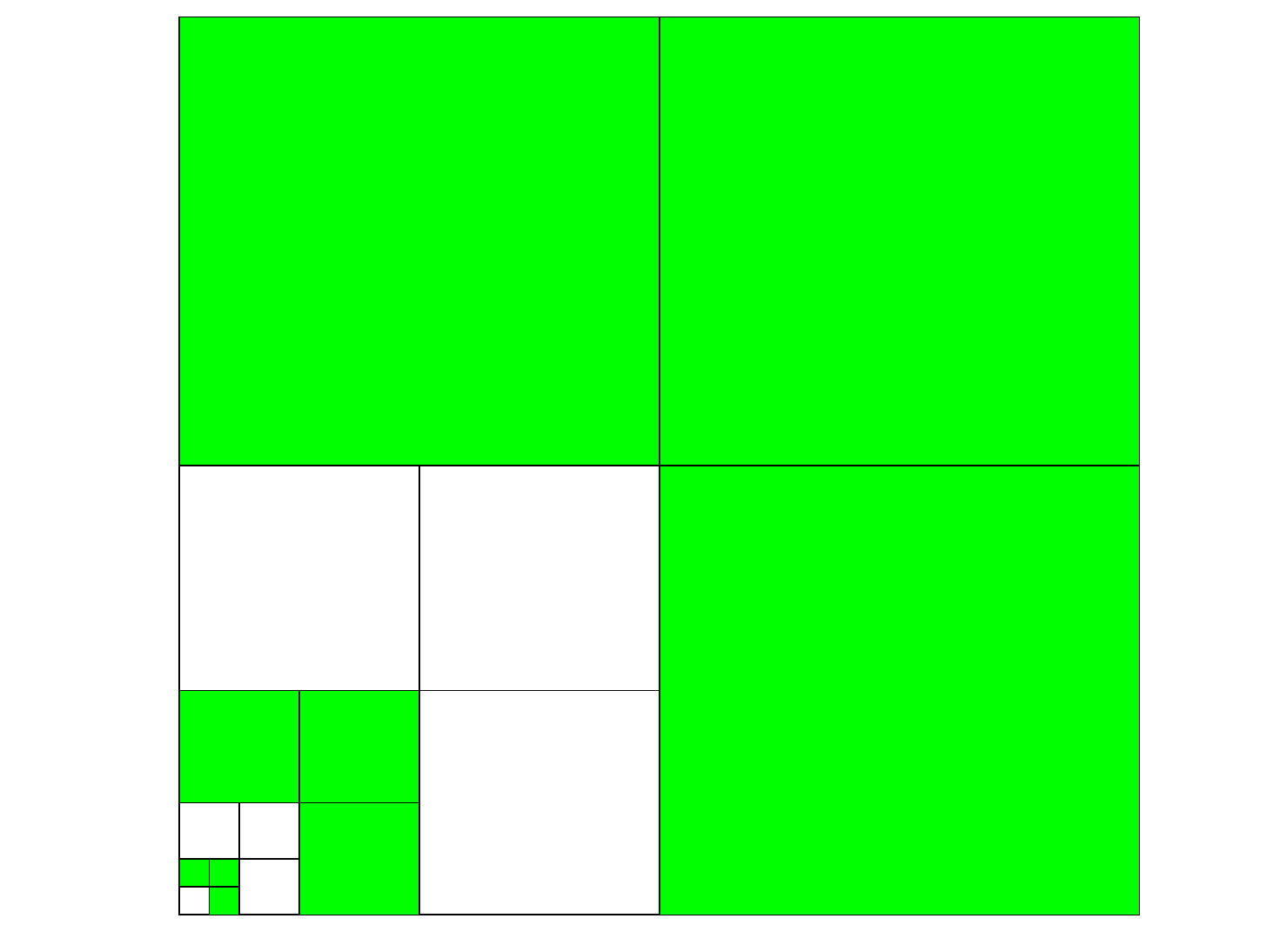}
\caption{The self-similar set $Q'$.}
\label{Q'}
\end{figure}
\end{exam}

Using Theorem 3.7 of \cite{BBE15} we show the following.

\begin{thm}\label{tHlesscH}
For a non-empty $\sigma$-compact metric space $K$, we have $\dim_{tH}K\le\mathcal{I}_c(K)$.
\end{thm}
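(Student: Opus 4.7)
My plan is to derive Theorem~\ref{tHlesscH} as a short corollary of Theorem~3.7 of \cite{BBE15}, the natural choice of ``separator'' being $F:=K\setminus\Lambda(K)$; by the definition \eqref{cH}, $\dim_H F=\mathcal{I}_c(K)$, so the conclusion will follow directly from the cited bound once the setup is verified.

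The first step, and essentially the only non-mechanical one, is to check the topological hypothesis required by that theorem, namely that the complement $K\setminus F=\Lambda(K)$ is totally disconnected. For this I would argue as follows: let $C\subseteq\Lambda(K)$ be any non-empty connected subset of $\Lambda(K)$. Since connectedness is an intrinsic property of the subspace topology, $C$ is also connected when regarded as a subset of $K$, and therefore $C$ lies inside a single connected component of $K$. But by the very definition of $\Lambda$, every $x\in C\subseteq\Lambda(K)$ has $\{x\}$ as its connected component in $K$, and hence $C=\{x\}$ is a singleton. Thus every connected subset of $\Lambda(K)$ is a singleton, i.e.\ $\Lambda(K)$ is totally disconnected.

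The second step is a direct invocation: since $K$ is $\sigma$-compact and metric and $K\setminus F$ has just been shown to be totally disconnected, \cite[Theorem~3.7]{BBE15} applies and yields $\dim_{tH}K\le\dim_H F=\mathcal{I}_c(K)$.

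The main obstacle, as far as I can see, is not in either of the two steps above but in the bookkeeping for the degenerate case $K=\Lambda(K)$, i.e.\ when $K$ is itself totally disconnected and $F=\varnothing$. In that situation one has to interpret $\mathcal{I}_c(K)=\dim_H\varnothing$ consistently with the convention in use and observe that a non-empty $\sigma$-compact totally disconnected metric space admits a basis of clopen sets, so $\dim_{tH}K=0$ and the desired inequality holds trivially. I would dispose of this case at the outset and then run the argument above for the non-degenerate case.
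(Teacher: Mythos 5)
Your verification that $\Lambda(K)$ is totally disconnected is correct and is exactly the (unstated) content of the paper's ``clearly''. The gap is in your second step. Theorem~3.7 of \cite{BBE15} characterizes $\dim_{tH}K$ as the minimum of those $h$ for which there exists $S\subset K$ with $\dim_H S\le h-1$ and $K\setminus S$ totally disconnected. Feeding it the separator $S=F=K\setminus\Lambda(K)$ therefore yields only $\dim_{tH}K\le\dim_H F+1=\mathcal{I}_c(K)+1$; you have silently dropped the ``$-1$'' in the hypothesis $\dim_H S\le h-1$, and with it the entire content of the theorem. To get the stated bound you need a separator whose Hausdorff dimension is at most $\mathcal{I}_c(K)-1$, and $F$ itself is in general far too large for that.

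The paper closes exactly this gap by applying Theorem~3.7 twice. Set $G=K\setminus\Lambda(K)$ and $t=\dim_{tH}G$. By the minimality in Theorem~3.7 applied to $G$, for every $\delta>0$ there is $S\subset G$ with $G\setminus S$ totally disconnected and $\dim_H S+1<t+\delta$. One then checks that $K\setminus S=\Lambda(K)\cup(G\setminus S)$ is still totally disconnected (any connected subset meeting $\Lambda(K)$ lies in a trivial component of $K$, hence is a singleton, and $G\setminus S$ is totally disconnected), so Theorem~3.7 applied to $K$ gives $\dim_{tH}K\le\dim_H S+1<t+\delta$. Letting $\delta\to 0$ and invoking the general inequality $\dim_{tH}G\le\dim_H G=\mathcal{I}_c(K)$ finishes the argument. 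A minor further point: in your degenerate case $F=\emptyset$ the inequality is not ``trivially'' true under the paper's own convention $\dim_H\emptyset=-1$ (one would be claiming $0\le-1$); that case genuinely requires interpreting $\mathcal{I}_c$ of a totally disconnected space as $0$ rather than $-1$.
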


Zhang \cite{YZ20} asked when $\dim_{tH}E=\dim_HE$, where $E$ is a fractal square. According to \cite{YZ20}, a digit set $\bD$ is called a \emph{Latin digit set}, if every row and every column has the same number of elements (see Figure \ref{fig:latin1}). For a fractal square $E=E(n,{\mathcal D})$, Zhang showed that if $\dim_{tH}E=\dim_HE$, then either $E=[0,1]\times C$, or $E=C\times [0,1]$ for some $C\subset [0,1]$, or ${\mathcal D}$ is a Latin digit set.

\begin{figure}
\subfigure[The digit set of $L$.]{\includegraphics[width=5 cm]{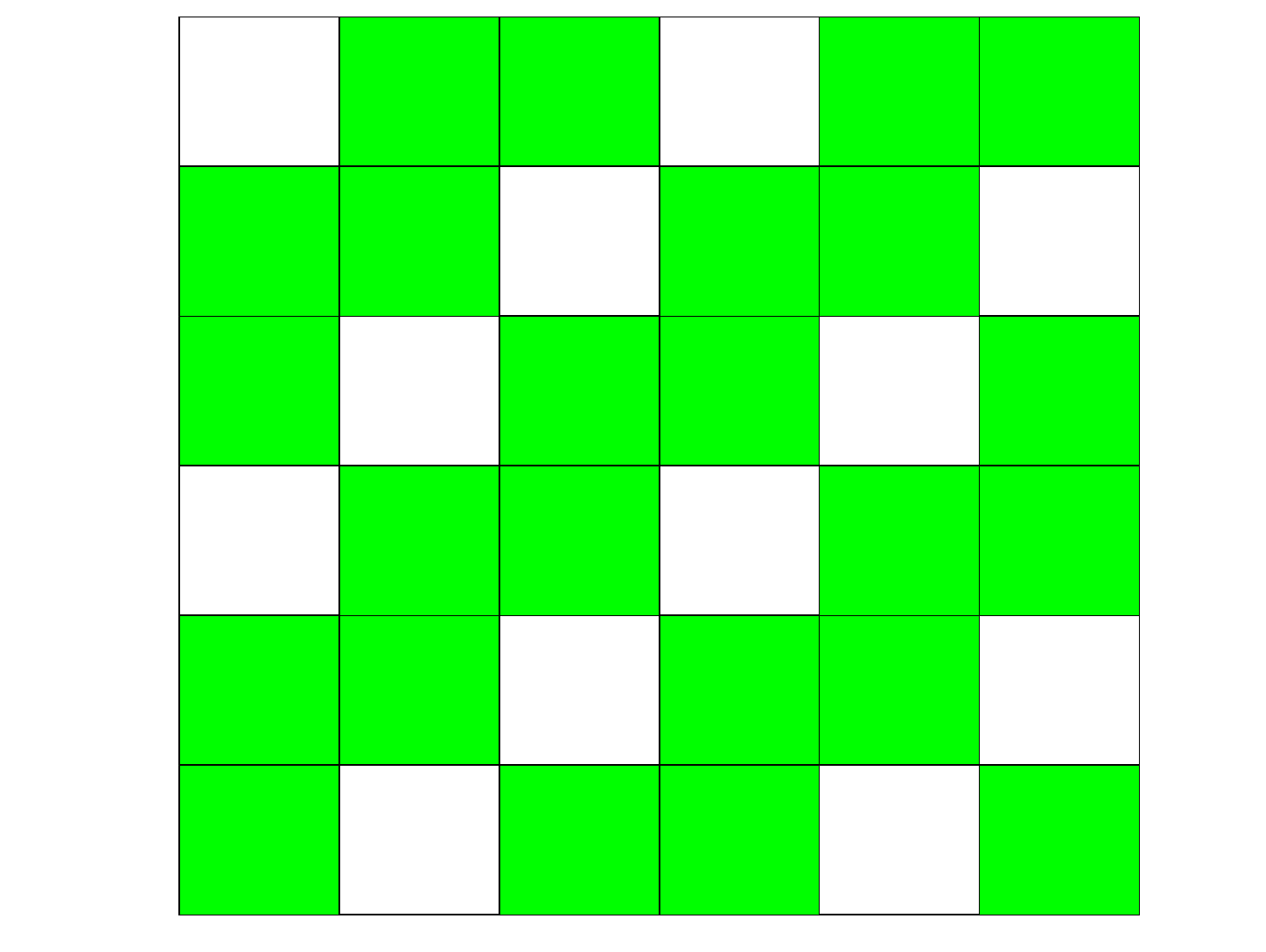} }
\subfigure[The Latin fractal square $L$.]{\includegraphics[width=5 cm]{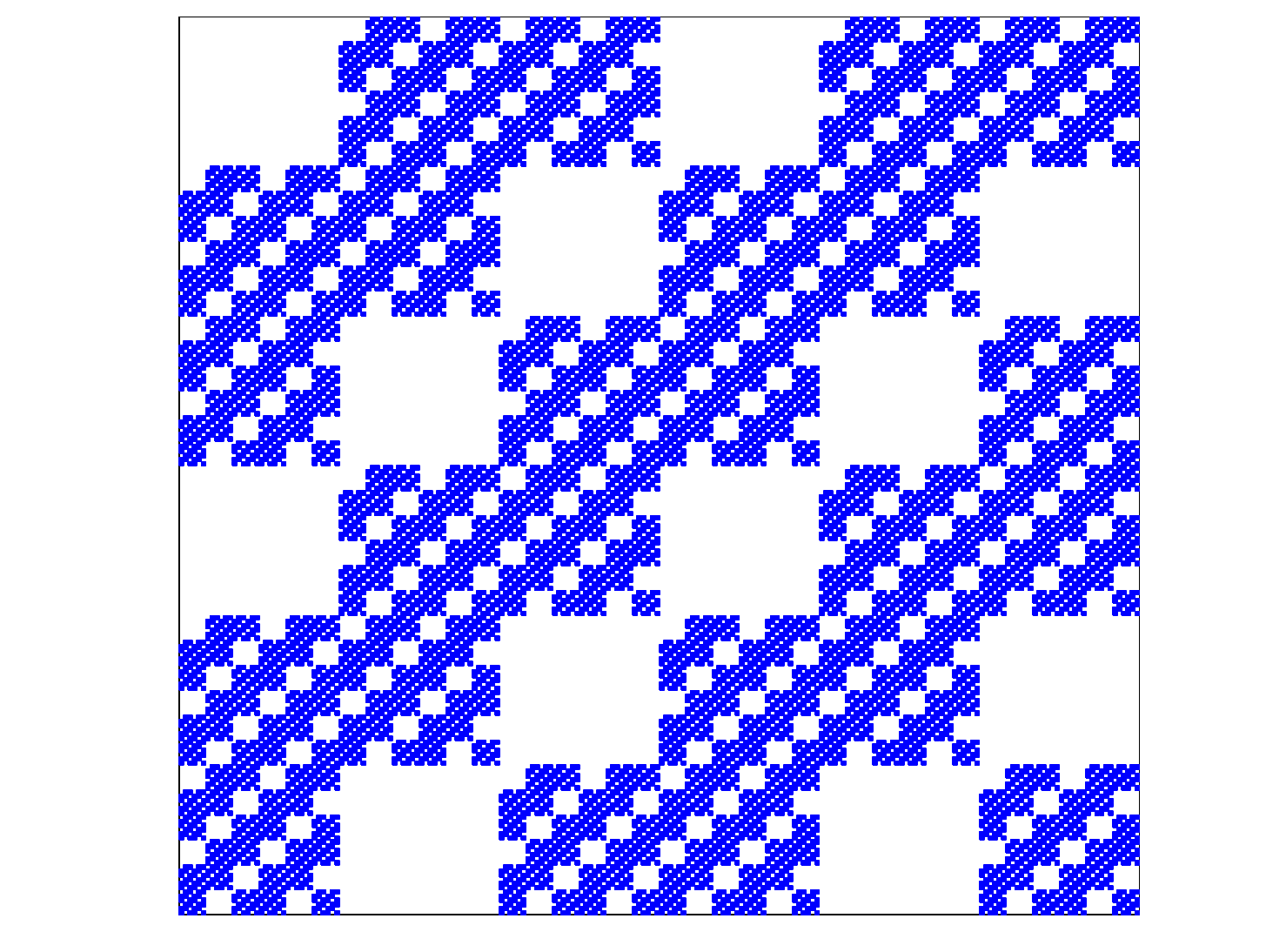}}
\caption{It is shown in \cite{YZ20} that $\log 12/\log 6=\dim_{tH} L<\dim_H L=\log 24/\log 6$. While by Theorem \ref{dimdecrease} and Theorem \ref{tHlesscH}, we directly have $\dim_{tH} L<\dim_H L$.}
\label{fig:latin1}
\end{figure}

As a corollary of Theorem \ref{dimdecrease} and Theorem \ref{tHlesscH}, we obtain a new necessary condition for $\dim_H E=\dim_{tH}E$.

\begin{coro}\label{necessary}
Let $E$ be a $d$-dimensional fractal cube. If $\dim_H E=\dim_{tH}E$, then $E$ has no trivial point.
\end{coro}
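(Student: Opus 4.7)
The statement is an immediate combination of the two theorems stated just above it, so my plan is a one-line contrapositive argument rather than a new construction. The main task is simply to chain the inequalities in the right direction.

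First I would argue by contradiction: assume $E$ has a trivial point. Since $E$ is a $d$-dimensional fractal cube, Theorem \ref{dimdecrease} directly applies and yields the strict inequality
\begin{equation*}
\mathcal{I}_c(E) < \dim_H E.
\end{equation*}

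Next I would invoke Theorem \ref{tHlesscH}. Every fractal cube $E$ is a nonempty compact subset of $\mathbb{R}^d$, hence in particular a nonempty $\sigma$-compact metric space, so the hypothesis of Theorem \ref{tHlesscH} is satisfied and we obtain
\begin{equation*}
\dim_{tH} E \le \mathcal{I}_c(E).
\end{equation*}
Combining these two inequalities gives $\dim_{tH} E < \dim_H E$, contradicting the hypothesis $\dim_H E = \dim_{tH} E$. Therefore $E$ has no trivial point.

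There is no real obstacle here; the only thing worth mentioning in writing is the verification that a fractal cube falls within the scope of Theorem \ref{tHlesscH} (compactness, hence $\sigma$-compactness) and within the scope of Theorem \ref{dimdecrease} (it is by definition a $d$-dimensional fractal cube). The whole argument fits in a few lines.
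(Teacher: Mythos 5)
Your argument is correct and is exactly the intended one: the paper presents this as an immediate consequence of Theorem \ref{dimdecrease} and Theorem \ref{tHlesscH}, combined in precisely the way you describe. The remark that a fractal cube is compact, hence $\sigma$-compact, so that Theorem \ref{tHlesscH} applies, is a sensible detail to include.
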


\begin{remark}
\emph{Another application of Theorem \ref{dimdecrease} is on the gap sequences of fractal cubes, a Lipschitz equivalent invariant introduced by Rao, Ruan and Yang \cite{RRY08}. For a fractal cube $K$, let $\{g_m(K)\}_{m\ge 1}$ be the gap sequence.
Using Theorem \ref{hasisland} of the present paper, it is proved in \cite{HZR20} that if $K$ has trivial point, then $\{g_m(K)\}_{m\ge 1}$ is equivalent to $\{m^{-1/\gamma}\}_{m\ge 1}$, where $\gamma=\dim_HK$.}
\end{remark}

Finally, we calculate the connectedness indexes of two fractal squares in Figure \ref{kuang}, and illustrate the application to Lipschitz classification.

\begin{exam}\label{examK}
\emph{Let $K$ and $K'$ be two fractal squares indicated by Figure \ref{kuang}. It is seen that $\dim_HK=\dim_HK'=\frac{\log 14}{\log 5}$. By Theorem 1.3 of \cite{MZ20}, one can obtain that $\dim_{tH}K=\dim_{tH}K'=1+\frac{\log 2}{\log 5}$. We will show in section 5 that
$$
\mathcal{I}_c(K)=\frac{\log(8+\sqrt{132}/2)}{\log 5}\quad\text{and}\quad
\mathcal{I}_c(K')=\frac{\log 13}{\log 5}.
$$
So $K$ and $K'$ are not Lipschitz equivalent.}

\begin{figure}[H]
\subfigure[The digit set of $K$.]{\includegraphics[width=4.5 cm]{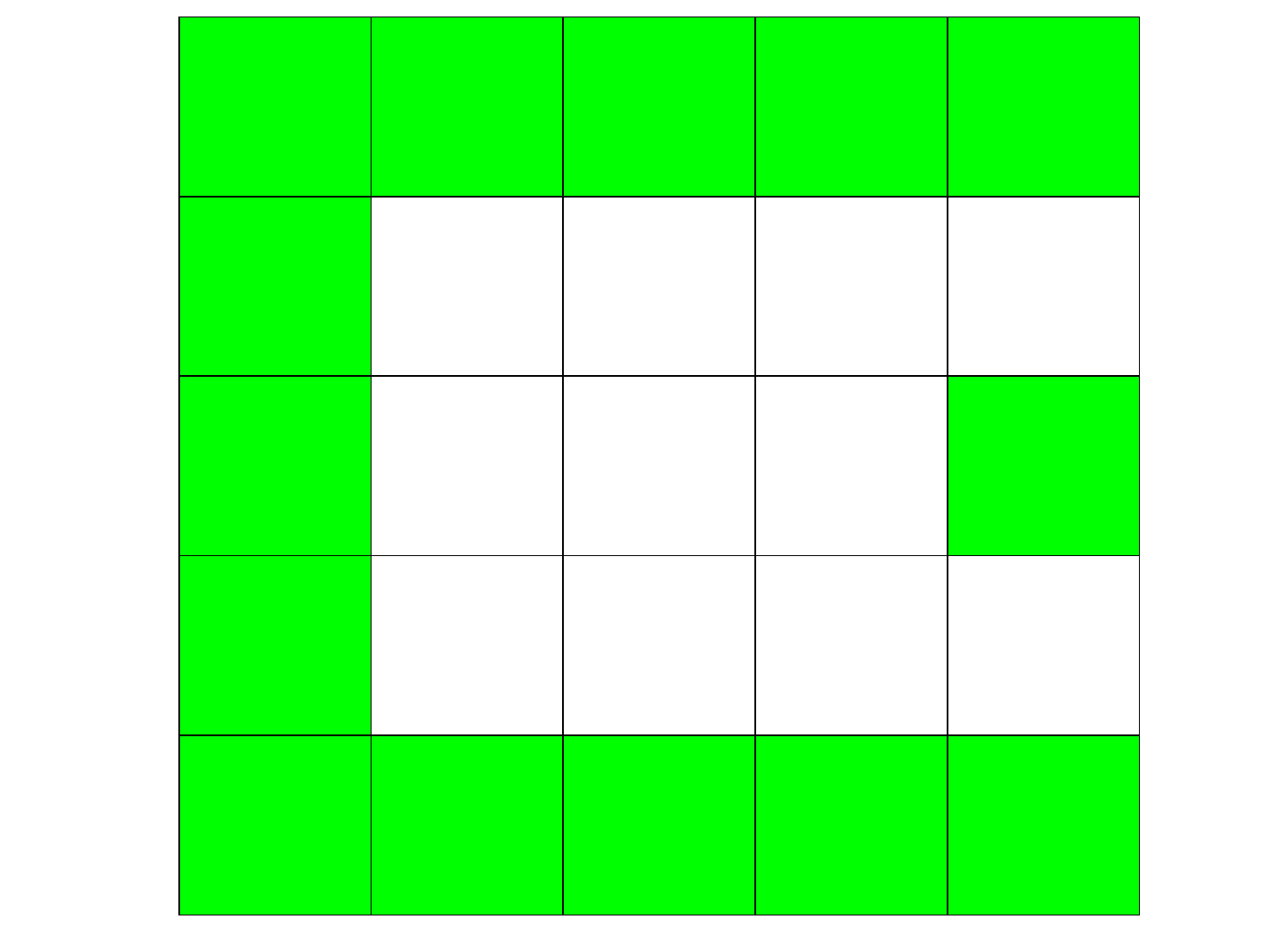} }\quad\quad
\subfigure[The digit set of $K'$.]{\includegraphics[width=4.5 cm]{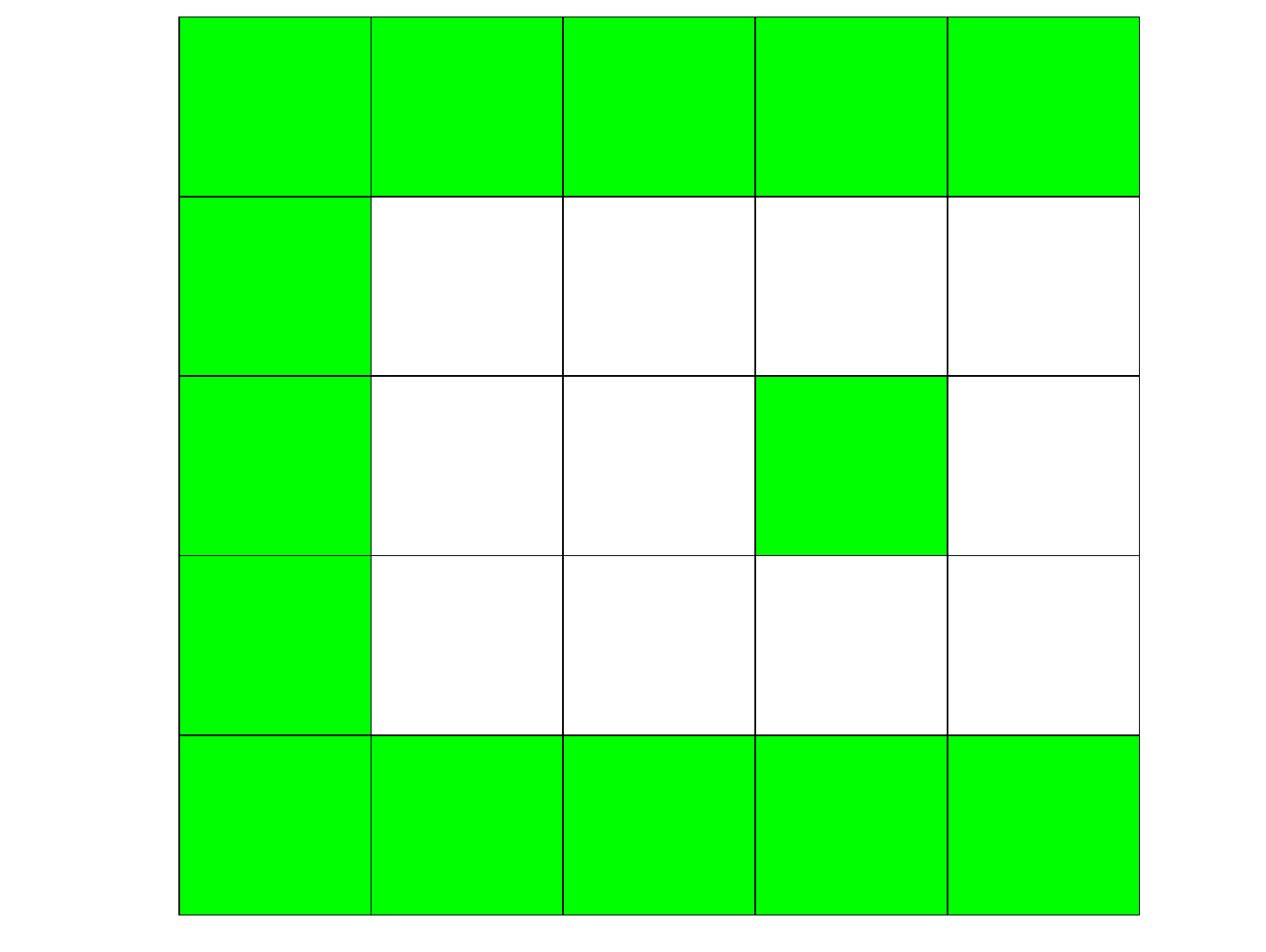}}
\caption{}
\label{kuang}
\end{figure}
\end{exam}

This article is organized as follows. In section 2, we recall some basic facts of $r$-face of the polytope $[0,1]^d$. In section 3, we prove Theorem \ref{dimdecrease}. In section 4, we prove Theorem \ref{tHlesscH}. In section 5, we give the details of Example \ref{examK}.

\section{\textbf{Preliminaries on $r$-faces of $[0,1]^d$}}

We recall some notions about convex polytopes, see \cite{ZM95}. 
Let $C\subset\mathbb{R}^d$ be a convex polytope, let $F$ be a convex subset of $C$. The \emph{affine hull} of $F$, denoted by $\aff(F)$, is the smallest affine subspace containing $F$. We say $F$ is a \emph{face} of $C$, if any closed line segment in $C$ with a relative interior in $F$ has both endpoints in $F$.

The \emph{dimension} of an affine subspace is defined to be the dimension of the corresponding linear vector space. The \emph{dimension} of a face $F$, denoted by $\dim F$, is the dimension of its affine hull. Moreover, $F$ is called an \emph{$r$-face} of $C$, if $F$ is a face of $C$ with dimension $r$. We take it by convention that $C$ is a $d$-face of itself if $\dim C=d$. For $z\in C$, a face $F$ of $C$ is called the \emph{containing face} of $z$ if $z$ is a relative interior point of $F$.

Let $\mathbf{e}_1,\dots,\mathbf{e}_d$ be the canonical basis of $\mathbb{R}^d$. The following facts about the $r$-faces of $[0,1]^d$ are obvious, see Chapter 2 of \cite{ZM95}.

\begin{lem}\label{face}
\indent\emph{(i)} Let $A\cup B=\{1,\dots,d\}$ be a partition with $\#A=r$. Then the set
\begin{equation}\label{Fface}
F=\left\{\sum\limits_{j\in A}c_j\mathbf{e}_{j};\ c_j\in[0,1]\right\}+b
\end{equation}
is an $r$-face of $[0,1]^d$ if and only if $b\in T$, where
\begin{equation}\label{T}
T:=\left\{\sum\limits_{j\in B}\varepsilon_j\mathbf{e}_j;\ \varepsilon_j\in\{0,1\}\right\};
\end{equation}
\indent\emph{(ii)} For any $r$-face $F$ of $[0,1]^d$, there exists a partition $A\cup B=\{1,\dots,d\}$ with $\#A=r$ such that $F$ can be written as \eqref{Fface}.
\end{lem}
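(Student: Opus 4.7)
My plan is to treat Lemma \ref{face} as a straightforward exercise in the constraint structure of $[0,1]^d$, organized around the elementary convexity observation that if $p$ lies in the relative interior of a segment $[q,q']\subset[0,1]^d$ and $p_j\in\{0,1\}$, then $q_j=q'_j=p_j$ (a convex combination of numbers in $[0,1]$ with positive weights equals $0$ or $1$ only when both summands already do). I would record this as a preliminary remark, since every subsequent step reduces to it.

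For part (i), I would handle the direction $(\Leftarrow)$ directly: given $b\in T$, verify that $F$ lies in $[0,1]^d$ because $A$-coordinates range over $[0,1]$ while $B$-coordinates are fixed at $0$ or $1$, check that the affine hull is $b+\mathrm{span}\{\mathbf{e}_j:j\in A\}$ of dimension $r$, and establish the face property by applying the preliminary observation coordinate-by-coordinate to each $j\in B$. For $(\Rightarrow)$ I plan to argue by contrapositive: if $b_j\neq 0$ for some $j\in A$, or $b_j\notin[0,1]$ for some $j\in B$, then $F\not\subset[0,1]^d$, so $F$ cannot be a face of $[0,1]^d$; in the remaining case $b_j\in(0,1)$ for some $j\in B$, then for any $p\in F$ and small $\delta>0$ the segment $[p-\delta\mathbf{e}_j,\,p+\delta\mathbf{e}_j]$ lies in $[0,1]^d$ with midpoint $p\in F$ but endpoints outside $F$, violating the face property.

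For part (ii), given an $r$-face $F$, I would pick a relative interior point $p\in F$ and define $B=\{j:p_j\in\{0,1\}\}$, $A=\{1,\ldots,d\}\setminus B$, and $\varepsilon_j=p_j$ for $j\in B$; then let $F^\ast$ be the face built from $(A,b)$ with $b=\sum_{j\in B}\varepsilon_j\mathbf{e}_j$ via part (i). To establish $F\subseteq F^\ast$ I would use an extrapolation trick: for any $q\in F$ and $j\in B$, since $p$ is relatively interior to $F$, the point $p^\sharp:=(1+\epsilon)p-\epsilon q$ belongs to $F$ for small $\epsilon>0$, and $p^\sharp\in[0,1]^d$ forces $q_j=\varepsilon_j$. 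To match dimensions $\dim F=\#A$ I would observe that $p$ is a relative interior point of both $F$ and $F^\ast$ (the latter by direct inspection of the product structure), so their affine hulls must coincide, yielding $r=\#A$ and in fact $F=F^\ast$.

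The only mild obstacle is the extrapolation step in part (ii), which requires $p$ to be a genuine relative interior point of $F$ rather than an arbitrary member; this is where a little care is needed to avoid circularity with the $(\Rightarrow)$ argument of (i), but a clean selection of $p$ up front and packaging the preliminary observation as a standalone remark sidesteps the issue.
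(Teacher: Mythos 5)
The paper itself offers no proof of this lemma: it is stated as an obvious fact with a pointer to Chapter 2 of Ziegler's \emph{Lectures on Polytopes}. So your self-contained argument is supplying something the authors omitted rather than paralleling anything in the text. Your preliminary convexity observation, the three-case contrapositive in part (i), and the extrapolation trick $p^\sharp=(1+\epsilon)p-\epsilon q$ used to prove $F\subseteq F^\ast$ in part (ii) are all correct and are exactly the right tools for the paper's segment-based definition of a face.

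The one step that does not hold up as written is the conclusion of (ii): from ``$p$ is a relative interior point of both $F$ and $F^\ast$'' you infer that the affine hulls coincide and hence that $r=\#A$ and $F=F^\ast$. Neither inference is valid for general convex sets with $F\subseteq F^\ast$: take $F=\{1/2\}$ or $F=[1/4,3/4]$ inside $F^\ast=[0,1]$ with $p=1/2$; in the first example the affine hulls differ, and in the second they agree yet $F\neq F^\ast$. What rescues the argument is precisely the hypothesis you have not used at that point, namely that $F$ is a \emph{face} of $[0,1]^d$ (the two counterexamples are not faces). Run your extrapolation trick in the reverse direction: for any $q\in F^\ast$, since $p$ is a relative interior point of $F^\ast$, the point $(1+\epsilon)p-\epsilon q$ lies in $F^\ast\subseteq[0,1]^d$ for small $\epsilon>0$, so the segment from $q$ to that point is a closed segment in $[0,1]^d$ having $p\in F$ as a relative interior point; the face property of $F$ then forces $q\in F$. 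This yields $F^\ast\subseteq F$, hence $F=F^\ast$ and $\#A=\dim F^\ast=\dim F=r$. With that repair the proof is complete.
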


We will call $F_0=\{\sum\limits_{j\in A}c_j\mathbf{e}_{j};\ c_j\in[0,1]\}$ a \emph{basic $r$-face related to the partition $A\cup B$}. We give a partition $B=B_0\cup B_1$ according to $b$ by setting
\begin{equation}\label{B0B1}
\begin{array}{ll}
B_0&=\{j\in B;\ \text{the $j$-th coordinate of $b$ is 0}\},\\
B_1&=\{j\in B;\ \text{the $j$-th coordinate of $b$ is 1}\}.
\end{array}
\end{equation}
Let $x=\sum\limits_{j\in A}\alpha_j\mathbf{e}_{j}+\sum\limits_{i\in B}\beta_i\mathbf{e}_i\in[0,1]^d$, we define two projection maps as follows:
\begin{equation}\label{piAB}
\pi_A(x)=\sum\limits_{j\in A}\alpha_j\mathbf{e}_{j},\quad
\pi_B(x)=\sum\limits_{i\in B}\beta_i\mathbf{e}_{i}.
\end{equation}
If $F$ is an $r$-face of $[0,1]^d$, we denote by $\mathring{F}$ the relative interior of $F$.

\begin{lem}[Chapter 2 of \cite{ZM95}]\label{faceprop}
Let $C\subset\mathbb{R}^d$ be a polytope.\\
\indent\emph{(i)} If $G$ and $F$ are faces of $C$ and $F\subset G$, then $F$ is a face of $G$.\\
\indent\emph{(ii)} If $G$ is a face of $C$, then any face of $G$ is also a face of $C$.
\end{lem}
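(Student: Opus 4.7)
The plan is to work directly from the definition of a face recalled earlier in the paper: $F$ is a face of the convex polytope $C$ iff every closed line segment contained in $C$ whose relative interior lies in $F$ has both endpoints in $F$. Both parts should then follow by a short chase of the inclusions $F \subset G \subset C$, without any further polytopal machinery. I will not need to invoke supporting hyperplanes, although that is an alternative route.

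For part (i), I would take an arbitrary closed line segment $[x,y] \subset G$ whose relative interior is contained in $F$, and show $x,y \in F$. Since $G \subset C$ we have $[x,y] \subset C$, and the relative interior of $[x,y]$ still lies in $F$; because $F$ is a face of $C$, both endpoints belong to $F$. This is exactly what it means for $F$ to be a face of $G$. The degenerate case $x=y$ is immediate since then the relative interior is the single point $\{x\} \subset F$.

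For part (ii), I would take a closed line segment $[x,y] \subset C$ whose relative interior lies in $F$, and establish $x,y \in F$ in two steps. First, since $F \subset G$, the relative interior of $[x,y]$ lies in $G$, and $[x,y] \subset C$; because $G$ is a face of $C$, this forces $x,y \in G$, and hence $[x,y] \subset G$ (as $G$ is convex). Second, applying the face property of $F \subset G$ to this same segment, now viewed as a segment in $G$ with relative interior in $F$, yields $x,y \in F$. The only real subtlety is keeping track of \emph{which} ambient polytope the face property is being applied in at each step; once that is written out carefully, the argument is a two-line chain, so I expect no genuine obstacle here. Since this is a standard fact from polytope theory (indeed attributed to the reference \cite{ZM95}), the proof in the paper may simply be omitted.
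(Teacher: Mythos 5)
Your argument is correct and complete: both parts follow by the inclusion chase you describe from the segment definition of a face given in Section 2, and you correctly handle the one point where care is needed (in part (ii), first pulling the endpoints into $G$ via the face property of $G$ in $C$, then using convexity of $G$ to place the whole segment in $G$ before applying the face property of $F$ in $G$). The paper itself offers no proof of this lemma --- it is quoted from Chapter 2 of \cite{ZM95} --- so there is nothing to compare against; your write-up supplies a valid elementary justification of the cited fact.
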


The following lemma will be needed in section 3.

\begin{lem}\label{face-to-face}
Let $F=F_0+b$ be an $r$-face of $[0,1]^d$ given by \eqref{Fface}. Let $u\in\mathbb{Z}^d$. Then $\mathring{F}\cap(u+[0,1]^d)\ne\emptyset$ if and only if $u=b-b'$ for some $b'\in T$, where $T$ is defined in \eqref{T}.
\end{lem}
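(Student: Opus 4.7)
The plan is to reduce the statement to coordinate-wise conditions, exploiting the very explicit description of $\mathring{F}$ that comes from Lemma \ref{face}. A point $x \in F$ lies in the relative interior $\mathring{F}$ if and only if its coordinates along the directions in $A$ lie in the open interval $(0,1)$ while its coordinates along the directions in $B$ are fixed to the corresponding coordinates of $b$, i.e. $\pi_B(x) = b$ and each component of $\pi_A(x)$ is strictly between $0$ and $1$.

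I would then fix $u \in \mathbb{Z}^d$ and analyze the condition $x \in \mathring{F}\cap (u+[0,1]^d)$ component by component. For $j \in A$, writing $x_j \in (0,1)$ and requiring $x_j - u_j \in [0,1]$ forces $u_j \in (x_j - 1, x_j)$, an open interval of length $1$ strictly inside $(-1,1)$, so the only integer it can contain is $0$; hence $u_j = 0$ for every $j \in A$. For $j \in B$, we have $x_j = b_j \in \{0,1\}$, and the requirement $b_j - u_j \in [0,1]$ together with $u_j \in \mathbb{Z}$ gives $b_j - u_j \in \{0,1\}$. Setting $b' = b - u$ then shows that $b'$ has zero coordinates on $A$ and coordinates in $\{0,1\}$ on $B$, i.e.\ $b' \in T$, which establishes the "only if" direction.

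For the converse, given $u = b - b'$ with $b' \in T$, I would exhibit an explicit witness: take $x$ with $x_j = 1/2$ for $j \in A$ and $x_j = b_j$ for $j \in B$. By construction $x \in \mathring{F}$; and $x - u$ equals $1/2$ on coordinates in $A$ and equals $b_j - (b_j - b'_j) = b'_j \in \{0,1\}$ on coordinates in $B$, so $x - u \in [0,1]^d$. Hence $x \in \mathring{F}\cap(u+[0,1]^d)$.

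No serious obstacle is anticipated: the argument is elementary once the description of $\mathring{F}$ from Lemma \ref{face} is unpacked. The only minor point to be careful about is the book-keeping of the two cases $b_j = 0$ and $b_j = 1$ when $j \in B$, since each case yields a different pair of admissible values for $u_j$ (namely $\{-1,0\}$ and $\{0,1\}$ respectively), both of which are captured uniformly by the single statement $b_j - u_j \in \{0,1\}$.
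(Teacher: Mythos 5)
Your proof is correct, but it takes a different route from the paper. You work entirely coordinate-wise: you unpack the explicit description of $\mathring{F}$ (coordinates in $(0,1)$ on $A$, pinned to $b_j\in\{0,1\}$ on $B$), observe that for $j\in A$ the constraint $x_j-u_j\in[0,1]$ with $x_j\in(0,1)$ forces the integer $u_j$ to be $0$, and for $j\in B$ forces $b_j-u_j\in\{0,1\}$, which is exactly $b-u\in T$; the converse is handled by an explicit witness point. The paper instead argues through polytope face theory: for sufficiency it notes that $F_0+b'$ is again a face of $[0,1]^d$, so the whole face $F$ sits inside $(b-b')+[0,1]^d$; for necessity it passes through the common face $F'=[0,1]^d\cap(u+[0,1]^d)$ and uses Lemma \ref{faceprop} (faces of faces are faces) to conclude $F_0+(b-u)$ is a face of $[0,1]^d$, whence $b-u\in T$ by Lemma \ref{face}(i). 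Your argument is more elementary and self-contained (it never invokes Lemma \ref{faceprop}, nor the fact that the intersection of two integer translates of the cube is a common face), while the paper's is shorter to state given the machinery already set up in Section 2. One trivial slip: from $x_j-u_j\in[0,1]$ you get $u_j\in[x_j-1,x_j]$, a closed interval, not the open one you wrote; since $x_j\in(0,1)$ this closed interval still lies in $(-1,1)$ and the conclusion $u_j=0$ stands.
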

\begin{proof}
``$\Leftarrow$": Suppose $b'\in T$, then $F-(b-b')=F_0+b'$, and it is an $r$-face of $[0,1]^d$ by Lemma \ref{face} (i). Applying a translation $b-b'$ we see that $F\subset(b-b')+[0,1]^d$, which completes the proof of the sufficiency.

``$\Rightarrow$": Suppose $\mathring{F}\cap(u+[0,1]^d)\ne\emptyset$. Let $z_0$ be a point in the intersection and let $F'=[0,1]^d\cap(u+[0,1]^d)$. Then $F'$ is a face of both $[0,1]^d$ and $u+[0,1]^d$. So we have $F\subset F'$ since $F'$ contains $z_0$, a relative interior point of $F$. Hence $F$ is an $r$-face of $F'$ by Lemma \ref{faceprop} (i). It follows that $F-u$ is an $r$-face of $F'-u$.

Notice that $F'$ is a face of $u+[0,1]^d$, then $F'-u$ is a face of $[0,1]^d$. By Lemma \ref{faceprop} (ii), $F-u=F_0+(b-u)$ is an $r$-face of $[0,1]^d$. By Lemma \ref{face} (i) we have $b-u\in T$.
\end{proof}

\section{\textbf{Trivial points of fractal cubes}}

Let $\Sigma=\{1,2,\dots,N\}$. Denote by $\Sigma^{\infty}$ and $\Sigma^{k}$ the sets of infinite words and words of length $k$ over $\Sigma$ respectively. Let $\Sigma^*=\bigcup_{k\geq0} \Sigma^{k}$ be the set of all finite words. For any $\sigma=\sigma_1\dots\sigma_k\in\Sigma^k$, let $\varphi_{\sigma}=\varphi_{\sigma_1}\circ\dots\circ \varphi_{\sigma_k}$.

In this section, we always assume that $E=E(n,\bD)$ is a $d$-dimensional fractal cube defined in \eqref{fractalcube} with IFS $\{\varphi_j\}_{j\in\Sigma}$. In the following, we always assume that
%

For a point $z\in E$, we say $F$ is the containing face of $z$ means that $F$ is a face of the polytope $[0,1]^d$ and it is the containing face of $z$.

\begin{lem}\label{dim-increase}
Let $z_0\in E$ and $\sigma\in\Sigma^k$ for some $k>0$. Let $F$ be the containing face of $z_0$, let $F'$ be the containing face of $\varphi_{\sigma}(z_0)$. Then either $\varphi_{\sigma}(z_0)\in F$ or $\dim F'\ge\dim F+1$.
\end{lem}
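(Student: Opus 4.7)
The plan is to carry out a direct coordinate-by-coordinate analysis using the explicit form of $\varphi_\sigma$. Composing the IFS maps yields $\varphi_\sigma(z)=n^{-k}(z+a_\sigma)$ where $a_\sigma=\sum_{i=1}^{k}n^{k-i}d_{\sigma_i}\in\mathbb{Z}^d$, and the containment $\varphi_\sigma([0,1]^d)\subset[0,1]^d$ forces every coordinate $a_{\sigma,j}$ to lie in $\{0,1,\dots,n^k-1\}$. Via Lemma \ref{face}(ii), the containing face $F$ of $z_0$ corresponds to the partition $A\cup B=\{1,\dots,d\}$ with $A=\{j:z_{0,j}\in(0,1)\}$ and $B=\{j:z_{0,j}\in\{0,1\}\}$, so $|A|=\dim F=r$.

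Next, I would inspect the coordinates of $\varphi_\sigma(z_0)$ one index at a time. For $j\in A$, the image coordinate $n^{-k}(z_{0,j}+a_{\sigma,j})$ lies in the open interval $(n^{-k}a_{\sigma,j},n^{-k}(a_{\sigma,j}+1))\subset(0,1)$, so every index in $A$ remains a ``free'' direction for $F'$. For $j\in B$ with $z_{0,j}=0$, the image coordinate equals $n^{-k}a_{\sigma,j}$, which is $0$ precisely when $a_{\sigma,j}=0$ and otherwise lies strictly in $(0,1)$; the symmetric statement holds when $z_{0,j}=1$, where the image equals $1$ iff $a_{\sigma,j}=n^k-1$ and lies in $(0,1)$ otherwise.

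Setting $B'=\{j\in B:\varphi_\sigma(z_0)_j\in(0,1)\}$ and reading off the containing face via Lemma \ref{face}, the free-index set of $\varphi_\sigma(z_0)$ is $A\cup B'$, giving $\dim F'=r+|B'|$. The desired dichotomy now drops out: if $B'=\emptyset$, then $\varphi_\sigma(z_0)_j=z_{0,j}$ for every $j\in B$, while for $j\in A$ the image coordinate lies in $(0,1)$, so $\varphi_\sigma(z_0)\in F$; if $B'\neq\emptyset$, then $\dim F'\geq r+1=\dim F+1$.

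I expect the only subtle point to be confirming that indices in $A$ really do remain free after applying $\varphi_\sigma$ — i.e., that the scaled and translated coordinate never accidentally lands exactly at $0$ or $1$. This is handled by observing that the sub-interval $[n^{-k}a_{\sigma,j},n^{-k}(a_{\sigma,j}+1)]$ of length $n^{-k}$ lies inside $[0,1]$, while $z_{0,j}\in(0,1)$ excludes the two endpoints of this sub-interval, forcing the image strictly into the interior. Everything else is bookkeeping with the combinatorial description of faces furnished by Lemma \ref{face}.
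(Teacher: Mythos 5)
Your proof is correct, and it takes a genuinely more elementary route than the paper's. The paper argues face‑theoretically: assuming $\varphi_\sigma(z_0)\notin F$, it uses the line‑segment definition of a face to show $\varphi_\sigma(F)\subset F'$, hence $\dim F'\ge\dim F$, and then rules out equality by contradiction, showing that if $F'$ were an $r$-face its defining partition and translation vector would have to coincide with those of $F$, forcing $\varphi_\sigma(z_0)\in F$ after all. You instead exploit the fully explicit form $\varphi_\sigma(z)=n^{-k}(z+a_\sigma)$ with $a_\sigma\in\{0,\dots,n^k-1\}^d$ and the standard identification of the containing face of a point with its set of coordinates lying in $(0,1)$, and you simply track each coordinate: indices in $A$ stay free, and an index $j\in B$ stays pinned at $z_{0,j}$ exactly when $a_{\sigma,j}$ is extremal ($0$ or $n^k-1$ as appropriate). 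This yields the sharper statement $\dim F'=\dim F+\#B'$, from which the dichotomy is immediate, and it entirely bypasses the auxiliary facts about faces of polytopes (Lemma \ref{faceprop} and the segment argument). The paper's approach is more robust in that it does not lean on the explicit coordinate description of the cube's faces, but for $[0,1]^d$ your computation is cleaner, constructive rather than by contradiction, and all the boundary checks (that $A$-coordinates cannot land on $0$ or $1$, and that a $B$-coordinate pinned at $0$ cannot jump to $1$) are handled correctly.
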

\begin{proof}
Let $A\cup B$ be the partition in Lemma \ref{face} (i) which defines $F$. By the definition of containing face, we have $z_0\in\mathring{F}$. Suppose that $\varphi_{\sigma}(z_0)\notin F$.

Take any point $x\in F\setminus\{z_0\}$ and let $I$ be the closed line segment in $F$ such that $x$ is an endpoint of $I$ and $z_0$ is a relative interior point of $I$.
It is clear that $\varphi_{\sigma}(I)\subset\varphi_{\sigma}([0,1]^d)\subset[0,1]^d$. Since $\varphi_{\sigma}(z_0)\in F'$, we have $\varphi_{\sigma}(I)\subset F'$. By the arbitrary of $x$ we deduce that $\varphi_{\sigma}(F)\subset F'$, hence
\begin{equation}\label{F'dim}
\dim F'\ge\dim \varphi_\sigma(F)=\dim F.
\end{equation}
We claim that $F'$ is not an $r$-face of $[0,1]^d$. This claim together with \eqref{F'dim} imply $\dim F'\ge\dim F+1$.

Suppose on the contrary that $F'$ is an $r$-face of $[0,1]^d$. Then there exists a partition $A'\cup B'=\{1,\dots,d\}$ such that $F'=F'_0+b'$, where $F'_0=\{\sum\limits_{j\in A'}c_j\mathbf{e}_{j};\ c_j\in[0,1]\}$ and $b'\in\{\sum\limits_{j\in B'}\varepsilon_j\mathbf{e}_j;\ \varepsilon_j\in\{0,1\}\}$. Since
$$
\frac{F_0}{n^k}+\frac{b}{n^k}+\varphi_{\sigma}(\mathbf{0})=\varphi_{\sigma}(F)\subset F'=F'_0+b',
$$
we have $F'_0=F_0$. Hence $A'=A$ and $B'=B$. It follows that
\begin{equation}\label{b'}
b'=\pi_B(\varphi_{\sigma}(z_0))=\frac{b}{n^k}+
\pi_B(\varphi_{\sigma}(\mathbf{0}))\in T.
\end{equation}
Notice that
\begin{equation}\label{piB0}
\pi_B(\varphi_\omega(\mathbf{0}))\in\left\{\sum\limits_{j\in B}c_j\mathbf{e}_j;\ c_j\in[0,\frac{n^k-1}{n^k}]\right\}
\end{equation}
for any $\omega\in\Sigma^k$, which together with \eqref{b'} imply that $\pi_B(\varphi_{\sigma}(\mathbf{0}))=\frac{(n^k-1)}{n^k}b$. Hence $b'=b$ and it follows that $\varphi_{\sigma}(z_0)\in F$, a contradiction. The claim is confirmed and the lemma is proven.
\end{proof}

For each $\sigma=\sigma_1\dots\sigma_k\in\Sigma^k$, we call $\varphi_\sigma([0,1]^d)\subset E_k$ a \emph{$k$-th cell} of $E_k$. Denote
\begin{equation}\label{layer}
\Sigma_{\sigma}=\{\omega\in\Sigma^k;\pi_A(\varphi_\omega(\mathbf{0}))
=\pi_A(\varphi_{\sigma}(\mathbf{0}))\}
\end{equation}
and set
\begin{equation}\label{H}
H_{\sigma}=\bigcup_{\omega\in\Sigma_\sigma}\varphi_\omega([0,1]^d).
\end{equation}
Indeed, $H_{\sigma}$ is the union of all $k$-th cells having the same projection with $\varphi_\sigma([0,1]^d)$ under $\pi_A$. From now on, we always assume that
\begin{equation}\label{assu2}
\text{$z_0$ is a trivial point of $E$ and $F$ is the containing face of $z_0$}.
\end{equation}

\begin{lem}\label{another-z1}
Let $k>0$, fix $\sigma\in\Sigma^k$. If $H_\sigma$ is not connected or $H_\sigma\cap F=\emptyset$, then there exists $\omega^*\in\Sigma_\sigma$ such that $\varphi_{\omega^*}(z_0)\notin F$ and it is a trivial point of $E$.
\end{lem}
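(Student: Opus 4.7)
The plan is to locate a word $\omega^*\in\Sigma_\sigma$ such that (i) $\varphi_{\omega^*}(z_0)\notin F$ and (ii) $\varphi_{\omega^*}([0,1]^d)$ is the only $k$-th cell of $E_k$ containing $\varphi_{\omega^*}(z_0)$. Once (ii) is in hand, the triviality of $z_0$ in $E$ will transfer to $\varphi_{\omega^*}(z_0)$ via a local clopen argument.

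First I would determine, for a prospective $\omega^*\in\Sigma_\sigma$, which cells $\varphi_\omega([0,1]^d)$ of $E_k$ contain $\varphi_{\omega^*}(z_0)$. Since $z_0\in\mathring F$ gives $\pi_A(z_0)\in(0,1)^r$ and $\pi_B(z_0)=b$, a coordinate-wise check (in the spirit of \eqref{piB0}) shows such an $\omega$ must lie in $\Sigma_\sigma$ and that $\pi_B(\varphi_\omega(\mathbf 0))$ is obtained from $\pi_B(\varphi_{\omega^*}(\mathbf 0))$ by shifting a (possibly empty) subset of the $B$-coordinates by $1/n^k$ towards $b$ (i.e.\ by $-1/n^k$ for $j\in B_0$ and $+1/n^k$ for $j\in B_1$). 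Any such neighboring cell shares a nonempty face with $\varphi_{\omega^*}([0,1]^d)$ at $\varphi_{\omega^*}(z_0)$, hence lies in the same connected component of $H_\sigma$. A parallel computation shows that $\varphi_\omega([0,1]^d)\cap F\neq\emptyset$ is equivalent to $\varphi_\omega(z_0)\in F$ and forces $\pi_B(\varphi_\omega(\mathbf 0))$ into the unique ``$b$-corner'' position, so at most one cell of $\Sigma_\sigma$ meets $F$ and it lies in a single component of $H_\sigma$.

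Using the hypothesis, I would pick a component $C$ of $H_\sigma$ with $C\cap F=\emptyset$: if $H_\sigma\cap F=\emptyset$ any component works, otherwise disconnectedness furnishes a component different from the unique one meeting $F$. Inside $\Sigma_C:=\{\omega\in\Sigma_\sigma:\varphi_\omega([0,1]^d)\subset C\}$ I would take $\omega^*$ to be lex-maximal in the $b$-direction: fix an ordering $j_1,\ldots,j_{d-r}$ of $B$ and declare $\omega\succ\omega'$ when at the first coordinate $j_i$ where $\pi_B(\varphi_\omega(\mathbf 0))$ and $\pi_B(\varphi_{\omega'}(\mathbf 0))$ disagree the value for $\omega$ is strictly closer to $b^{(j_i)}$. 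Property (i) is immediate from $C\cap F=\emptyset$. For (ii), any other cell of $E_k$ containing $\varphi_{\omega^*}(z_0)$ must lie in $\Sigma_\sigma$ and, by the face-sharing remark above, in $C$, hence in $\Sigma_C$; but its $B$-projection then differs from $\omega^*$'s by a strictly $b$-ward shift in some coordinate, contradicting lex-maximality.

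Finally I would derive the triviality of $\varphi_{\omega^*}(z_0)$. By (ii) and the finiteness of $\Sigma^k$, there is $\eta>0$ with $B_\eta(\varphi_{\omega^*}(z_0))\cap\varphi_\omega([0,1]^d)=\emptyset$ for every $\omega\in\Sigma^k\setminus\{\omega^*\}$. Given $\epsilon\in(0,\eta n^k)$, triviality of $z_0$ in $E$ yields a clopen partition $E=V_1\sqcup V_2$ with $z_0\in V_1$ and $\mathrm{diam}(V_1)<\epsilon$. The image $\varphi_{\omega^*}(V_1)$ is then clopen in $\varphi_{\omega^*}(E)$ and lies in $B_\eta(\varphi_{\omega^*}(z_0))$; since $B_\eta(\varphi_{\omega^*}(z_0))\cap E=B_\eta(\varphi_{\omega^*}(z_0))\cap\varphi_{\omega^*}(E)$, one checks that $\varphi_{\omega^*}(V_1)$ is also clopen in $E$. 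Letting $\epsilon\to 0$ squeezes the component of $\varphi_{\omega^*}(z_0)$ in $E$ to a singleton. The main obstacle I anticipate is the coordinate bookkeeping in the first step; once the face-sharing property and the characterization of neighboring cells are established, the lex-max choice and the clopen transfer are routine.
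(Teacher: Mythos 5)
Your proof is correct and follows essentially the same route as the paper: the observation that at most one cell of $\Sigma_\sigma$ can meet $F$ (and then $\varphi_\omega(z_0)\in F$), the choice of a connected component of $H_\sigma$ disjoint from $F$, and the selection of a cell extremal in the $b$-direction so that $\varphi_{\omega^*}(z_0)$ lies in no other $k$-th cell. The only differences are cosmetic: you take a lex-maximal element where the paper first minimizes $\sum_{j\in B_0}\alpha_j$ and then maximizes $\sum_{j\in B_1}\beta_j$ (both are total refinements of the same coordinate-wise ``closer to $b$'' partial order), and you spell out the clopen-transfer step showing that membership in a single cell plus triviality of $z_0$ yields triviality of $\varphi_{\omega^*}(z_0)$, which the paper leaves implicit.
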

\begin{proof}
Let $\dim F=r$ and let $A\cup B$ be the partition in Lemma \ref{face} (i) which defines $F$.

We claim that if $H_\sigma\cap F\ne\emptyset$, then there is only one $k$-th cell in $H_\sigma$ which intersects $F$. Actually, since $\varphi_\omega(\mathbf{0})\in[0,\frac{n^k-1}{n^k}]^d$ for any $\omega\in\Sigma^k$, if $\varphi_\omega([0,1]^d)\cap F\ne\emptyset$ for some $\omega\in\Sigma_{\sigma}$, then similar to the proof of Lemma \ref{dim-increase} we must have $\pi_B(\varphi_\omega(\mathbf{0}))=\frac{(n^k-1)}{n^k}b$. On the other hand, $\pi_A(\varphi_\omega(\mathbf{0}))=\pi_A(\varphi_{\sigma}(\mathbf{0}))$, so $\omega$ is unique in $\Sigma_\sigma$. Furthermore, $\varphi_\omega(z_0)\in F$ in this scenario.

By the assumption of the lemma and the claim above, there is a connected component $U$ of
$H_\sigma$ such that $U\cap F=\emptyset$. Let $W=\{\omega\in\Sigma_\sigma;\varphi_\omega([0,1]^d)\subset U\}$. For each $\omega\in W$, write
$$
\pi_B(\varphi_\omega(\mathbf{0}))=\sum\limits_{j\in B_0}\alpha_j(\omega)\mathbf{e}_j+\sum\limits_{j\in B_1}\beta_j(\omega)\mathbf{e}_j
$$
First, we take the subset $W'\subset W$ by
$$
W'=\left\{\omega\in W;\ \sum\limits_{j\in B_0}\alpha_j(\omega)\text{ attains the minimum}\right\}.
$$
Then we take $\omega^*\in W'$ such that
$$
\sum\limits_{j\in B_1}\beta_j(\omega^*)=\max\{\sum\limits_{j\in B_1}\beta_j(\omega);\ \omega\in W'\}.
$$
Since $U\cap F=\emptyset$, we have $\varphi_{\omega^*}(z_0)\notin F$.

Let us check that $\varphi_{\omega^*}(z_0)$ is a trivial point of $E$. To this end, we only need to show that
\begin{equation}\label{star}
\varphi_{\omega^*}(z_0)\notin\varphi_\omega([0,1]^d),
\end{equation}
where $\omega\in\Sigma^k\setminus\{\omega^*\}$. Notice that $\varphi_{\omega^*}(z_0)\in\varphi_{\omega^*}(\mathring{F})$, it is clear that \eqref{star} holds for any $\omega\notin\Sigma_\sigma$. Since $U$ is a connected component of $H_\sigma$, we see that \eqref{star} holds for any $\omega\notin W$.

Now suppose $\varphi_{\omega^*}(z_0)\in\varphi_\omega([0,1]^d)$ for some $\omega\in W$, then $\varphi_{\omega^*}(z_0)\in\varphi_{\omega^*}([0,1]^d)\cap\varphi_\omega([0,1]^d)$. By Lemma \ref{face-to-face} we have
$$
\pi_B(\varphi_\omega(z_0))-\pi_B(\varphi_{\omega^*}(z_0))=
\pi_B(\varphi_\omega(\mathbf{0}))-\pi_B(\varphi_{\omega^*}(\mathbf{0}))
\in\frac{b-b'}{n^k},
$$
where $b'\in T$. By the definition of $B_0$ and $B_1$ in \eqref{B0B1}, we know that the $j$-th coordinate of $b-b'$ is $0$ or $-1$ if $j\in B_0$ and is $0$ or $1$ if $j\in B_1$. According to the choosing process of $\omega^*$, on one hand, we have $\sum\limits_{j\in B_0}(\alpha_j(\omega)-\alpha_j(\omega^*))\ge 0$.
So $\alpha_j(\omega)=\alpha_j(\omega^*)$ for $j\in B_0$, that is to say, $\omega\in W'$. On the other hand, since $\omega\in W'$, we have $\sum\limits_{j\in B_1}(\beta_j(\omega)-\beta_j(\omega^*))\le 0$, which forces that $\beta_j(\omega)=\beta_j(\omega^*)$ for $j\in B_1$. Therefore, $b=b'$ and hence $\omega=\omega^*$. This finishes the proof.
\end{proof}

For $k>0$, denote $\bD_k=\bD+n\bD+\dots+n^{k-1}\bD$. We call $E_k=([0,1]^d+\bD_k)/n^k$ the \emph{$k$-th approximation} of $E$. Clearly, $E_k\subset E_{k-1}$ for all $k\ge 1$ and $E=\bigcap_{k=0}^\infty E_k$. For $\sigma=(\sigma_\ell)_{\ell\ge 1}\in\Sigma^\infty$, we denote $\sigma|_k=\sigma_1\dots\sigma_k$ for $k>0$. We say $\sigma$ is a \emph{coding} of a point $x\in E$ if $\{x\}=\bigcap_{k\ge 1}\varphi_{\sigma_1\dots\sigma_k}(E)$.

\begin{defn}\label{island}
Let $U$ be a connected component of $E_k$, we call $U$ a \emph{$k$-th island} if $U\cap\partial[0,1]^d=\emptyset$.
\end{defn}

\begin{lem}\label{leftarrow}
If $E_k$ contains a $k$-th island for some $k>0$, then $E$ has a trivial point.
\end{lem}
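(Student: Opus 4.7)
The plan is to use the given $k$-th island $U$ to manufacture a nested sequence of connected components at finer and finer scales whose intersection is a single trivial point of $E$. Since $U$ is a connected component of the polyhedral set $E_k=\bigcup_{\sigma\in\Sigma^k}\varphi_\sigma([0,1]^d)$, it is a union of $k$-th cells, so I fix some $\sigma^{*}\in\Sigma^k$ with $\varphi_{\sigma^{*}}([0,1]^d)\subset U$. Let $\tau_i\in\Sigma^{ik}$ denote the concatenation of $i$ copies of $\sigma^{*}$, and set $U_1:=U$ and $U_{i+1}:=\varphi_{\tau_i}(U)$ for $i\ge 1$. Because $\varphi_{\sigma^{*}}(U)\subset\varphi_{\sigma^{*}}([0,1]^d)\subset U$, a straightforward induction gives the nesting $U_{i+1}\subset U_i$, while the contraction factor $n^{-ik}$ yields $\mathrm{diam}(U_{i+1})=n^{-ik}\mathrm{diam}(U)\to 0$.

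The heart of the argument is to show that $U_i$ is actually a connected component of $E_{ik}$, not merely a connected subset. It is connected (as a continuous image of $U$) and contained in $\varphi_{\tau_{i-1}}(E_k)\subset E_{ik}$. Here the island hypothesis enters decisively: since $U$ is compact and disjoint from $\partial[0,1]^d$, we have $\delta:=\dist(U,\partial[0,1]^d)>0$, so $U_i$ lies in the relative interior of $\varphi_{\tau_{i-1}}([0,1]^d)$ at distance at least $\delta/n^{(i-1)k}$ from $\partial\varphi_{\tau_{i-1}}([0,1]^d)$. Any cell $\varphi_\omega([0,1]^d)$ of $E_{ik}$ whose length-$(i-1)k$ prefix differs from $\tau_{i-1}$ lies inside a different $(i-1)k$-th cell and can meet $\varphi_{\tau_{i-1}}([0,1]^d)$ only on its boundary, so it is separated from $U_i$ by at least $\delta/n^{(i-1)k}$. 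Since $U_i$ is plainly a connected component of $\varphi_{\tau_{i-1}}(E_k)$ (being the homeomorphic image of the component $U$ of $E_k$) and since $E_{ik}$ is locally connected, $U_i$ is clopen in $E_{ik}$ and therefore equals the connected component of $E_{ik}$ containing it.

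Finally, $\bigcap_{i\ge 1}U_i$ is a decreasing intersection of nonempty compact sets with vanishing diameter, hence a single point $\{x\}$. As $x\in U_i\subset E_{ik}$ for every $i$ and $E=\bigcap_i E_{ik}$, we obtain $x\in E$. The connected component $C$ of $E$ containing $x$ is a connected subset of each $E_{ik}$ that meets $U_i$, so $C\subset U_i$ for every $i$, forcing $C=\{x\}$; thus $x$ is a trivial point of $E$. I expect the main obstacle to be the clopen verification above: ensuring that the shrunken copy $\varphi_{\tau_{i-1}}(U)$ does not merge with any cells of $E_{ik}$ lying outside $\varphi_{\tau_{i-1}}([0,1]^d)$ is precisely where the boundary-avoidance in the definition of a $k$-th island is indispensable.
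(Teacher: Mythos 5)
Your proof is correct and uses essentially the same mechanism as the paper: a rescaled copy of the island placed inside a cell remains a connected component of the finer approximation (precisely because the island avoids $\partial[0,1]^d$), and a nested shrinking sequence of such components isolates a single trivial point. The paper's proof runs the same argument along an arbitrary coding in which island letters occur infinitely often (thereby exhibiting a whole set $P$ of trivial points, reused later in the proof of Theorem \ref{dimdecrease}), whereas you take the periodic coding $\sigma^*\sigma^*\cdots$; your clopen/distance verification is a careful writing-out of the step the paper labels ``it is easy to see''.
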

\begin{proof}
Since we can regard $E(n,\bD)$ as $E(n^k,\bD_k)$, without loss of generality, we assume that $E_1$ has an island and denote it by $U$. Write $U=\bigcup_{j\in J}\varphi_j([0,1]^d)$, where $J\subset\Sigma$. We call a letter $j\in J$ a special letter. A sequence $\sigma=(\sigma_i)_{i\ge 1}\in\Sigma^\infty$ is called a special sequence, if special letters occur infinitely many times in $\sigma$.

Let
\begin{equation}\label{tildeF}
P=\{x\in E;~\text{at least one coding of $x$ is a special sequence}\}.
\end{equation}
We claim that every point in $P$ is a trivial point. Let $z\in P$ and let $\sigma=(\sigma_i)_{i\ge 1}$ be a coding of $z$ such that $\sigma$ is a special sequence. Suppose $\sigma_k$ is a special letter, it is easy to see that $z\in\varphi_{\sigma_1\dots\sigma_k}([0,1]^d)\subset\varphi_{\sigma_1\dots\sigma_{k-1}}(U)$ and $\varphi_{\sigma_1\dots\sigma_{k-1}}(U)$ is a connected component of $E_k$ with $diam(\varphi_{\sigma_1\dots\sigma_{k-1}}(U))\le\sqrt{d}/n^{k-2}$. Notice that special letters occur infinitely often in $\sigma$, we conclude that $z$ is a trivial point.
\end{proof}

\begin{thm}\label{hasisland}
Let $E$ be a fractal cube with $\dim\aff(E)=d$. Then $E$ has a trivial point if and only if $E_k$ contains a $k$-th island for some $k\ge 1$.
\end{thm}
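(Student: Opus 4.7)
The $(\Leftarrow)$ direction is exactly Lemma~\ref{leftarrow}, so I focus on $(\Rightarrow)$. Given the trivial point $z_0$ of $E$ with containing face $F$, my plan has two parts. First (Step 1) I use Lemmas~\ref{dim-increase} and~\ref{another-z1} to promote $z_0$ to a trivial point $z^* \in \mathring{[0,1]^d}$. Then (Step 2) I use a compactness/quasi-component argument to show the $E_k$-component of $z^*$ eventually avoids $\partial[0,1]^d$, producing a $k$-th island.

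For Step 1 I induct on $d-\dim F$. If $\dim F=d$, set $z^*:=z_0$ and we are done. Otherwise I seek $k\ge 1$ and $\sigma\in\Sigma^k$ such that $H_\sigma$ is disconnected or $H_\sigma\cap F=\emptyset$; Lemma~\ref{another-z1} then supplies a trivial point $\varphi_{\omega^*}(z_0)\notin F$, and Lemma~\ref{dim-increase} upgrades its containing face to one of strictly larger dimension, so the induction advances.

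The main obstacle is producing such a pair $(k,\sigma)$ whenever $\dim F<d$, and this is where the hypothesis $\dim\aff(E)=d$ enters. I argue by contradiction: suppose every $H_\sigma$ at every level is connected and meets $F$. Fix a coding $\tau$ of $z_0$. Since $z_0\in\mathring{F}$, the cell $\varphi_{\tau|_k}([0,1]^d)$ sits at the $F$-end of its column $H_{\tau|_k}$, so connectedness puts the entire column inside the $E_k$-component of $z_0$, whose diameter tends to zero by triviality of $z_0$. This forces the $B$-extent of $H_{\tau|_k}$ to zero and yields $E\cap \pi_A^{-1}(\pi_A(z_0))=\{z_0\}$ in the limit. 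Meanwhile, running the same hypothesis over all $\sigma$ shows every column has an $F$-end cell, so $\pi_A(E)=\pi_A(E\cap F)$. Propagating these two facts through the self-similar decomposition $E=\bigcup_j \varphi_j(E)$ to every $\pi_A$-value in $\pi_A(E)$, I expect to conclude $E\subseteq F$, contradicting $\dim\aff(E)=d$. This self-similar propagation, leveraging that $E\cap F$ is itself a fractal cube in $F$, is where the technical work concentrates.

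For Step 2 set $\delta:=\dist(z^*,\partial[0,1]^d)>0$. Because $\{z^*\}$ is a connected component in the compact metric space $E$, it coincides with its quasi-component, so there is a clopen $U\subseteq E$ with $z^*\in U$ and $\operatorname{diam}(U)<\delta/2$. Put $V:=E\setminus U$ and $\eta:=\dist(U,V)>0$. Each cell of $E_k$ meets $E$, and for $k$ large enough that $\sqrt{d}/n^k<\min\{\eta/3,\delta/4\}$ no cell can meet both $U$ and $V$; classifying cells by which side they meet gives a decomposition $E_k=\tilde U_k\sqcup\tilde V_k$ with $\dist(\tilde U_k,\tilde V_k)\ge\eta-2\sqrt{d}/n^k>0$. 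Hence the $E_k$-component of $z^*$ lies in $\tilde U_k$, is contained in the $\sqrt{d}/n^k$-neighborhood of $U$, and has diameter at most $\operatorname{diam}(U)+2\sqrt{d}/n^k<\delta$. It therefore avoids $\partial[0,1]^d$ and is a $k$-th island.
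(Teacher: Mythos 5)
Your treatment of the backward direction (citing Lemma~\ref{leftarrow}) and your Step~2 are fine; in fact Step~2 is a valid alternative to the paper's argument, which instead takes a Hausdorff-metric limit of the $E_k$-components of $z^*$ and observes that the limit would be a non-degenerate connected subset of $E$ through $z^*$. Your quasi-component/clopen-separation argument reaches the same conclusion and is rigorous as written. The problem is Step~1, and specifically the production of a pair $(k,\sigma)$ with $H_\sigma$ disconnected or $H_\sigma\cap F=\emptyset$. This is the heart of the theorem --- it is exactly where the hypothesis $\dim\aff(E)=d$ must be spent --- and your argument for it is an acknowledged sketch (``I expect to conclude $E\subseteq F$''), not a proof. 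Worse, the route you sketch is doubtful: from your two facts, namely that the fiber of $E$ over $\pi_A(z_0)$ degenerates to $\{z_0\}$ and that $\pi_A(E)=\pi_A(E\cap F)$, the desired conclusion $E\subseteq F$ does not follow by any evident ``propagation.'' Fact (a) concerns only the single fiber over $\pi_A(z_0)$ (its columns shrink because they sit inside the shrinking $E_k$-component of the trivial point $z_0$); for a generic $a\in\pi_A(E)$ the column over $a$ need not lie in a small component, so you get no control on its $B$-extent, and fact (b) only says each fiber \emph{meets} $F$, not that it is contained in $F$. Also, $E\cap F$ need not behave like a fractal cube whose trivial points transfer back to $E$: images $\varphi_\omega(z_0)$ of a trivial point are generally not trivial points of $E$, which is precisely why Lemma~\ref{another-z1} has to choose $\omega^*$ extremal in the $B_0$ and $B_1$ coordinates.

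The paper closes this gap with a direct construction that you are missing. If the columns $H_{\sigma|_k}$ along a coding of $z_0$ are all connected, then for large $p$ the column $H_{\sigma|_p}$ lies in the $E_p$-component of $z_0$ and hence has diameter less than $1/3$. Since $\dim\aff(E)=d$ and $F$ is a proper face, some first-level cell $\varphi_j([0,1]^d)$ is disjoint from $F$ (otherwise every digit would have fixed $B$-coordinates and $E$ would lie in a hyperplane). Then $H_{j\sigma_1\dots\sigma_p}=\bigcup_{i\in\Sigma_j}\varphi_i(H_{\sigma|_p})$ is either a single copy disjoint from $F$, or a union of several copies of diameter $<1/(3n)$ whose translates are at distance $\ge(1-1/3)/n$ apart, hence disconnected. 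Either way Lemma~\ref{another-z1} applies. You should replace your global contradiction hypothesis (all columns at all levels connected and meeting $F$) with this localized, constructive step; as it stands, Step~1 of your proof is not complete.
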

\begin{proof}
Let $z_0\in E$ be a trivial point. We claim that there exists another trivial point $z^*\in E\cap(0,1)^d$, that is, the dimension of the containing face of $z^*$ is $d$.

Suppose $F$ is the containing face of $z_0$ with $\dim F=r$, where $0\le r\le d-1$. Let $A\cup B$ be the partition in Lemma \ref{face} (i) which defines $F$. Let $\sigma=(\sigma_\ell)_{\ell\ge 1}\in\Sigma^\infty$ be a coding of $z_0$. Then for each $k>0$, $z_0\in H_{\sigma|_k}\cap F$, where $H_{\sigma|_k}$ is defined in \eqref{H}. We will show by two cases that $E$ contains another trivial point of the form $\varphi_\omega(z_0), \omega\in\Sigma^*$, and it is not in $F$.

\medskip
\textit{Case 1. }$H_{\sigma|_k}$ is not connected for some $k>0$.

By Lemma \ref{another-z1}, there exists $\omega^*\in\Sigma_{\sigma|_k}$ such that $z_1=\varphi_{\omega^*}(z_0)\notin F$ is a trivial point of $E$.

\medskip
\textit{Case 2. }$H_{\sigma|_k}$ is connected for all $k>0$.

Let $p>0$ be an integer such that $C_p$ is the connected component of $E_p$ containing $z_0$ and $diam(C_p)<\frac{1}{3}$. It is clear that $H_{\sigma|_p}\subset C_p$, so we have $diam(H_{\sigma|_p})<\frac{1}{3}$. Since $\dim\aff(E)=d$, there exist $j\in\Sigma$ such that
\begin{equation}\label{eqcapF}
\varphi_j([0,1]^d)\cap F=\emptyset.
\end{equation}
We consider the set $H_{j\sigma_1\dots\sigma_p}$. Let $\Sigma_j=\{i\in\Sigma;\pi_A(\varphi_i(\mathbf{0}))=\pi_A(\varphi_j(\mathbf{0}))\}$. It is easy to see that $H_{j\sigma_1\dots\sigma_p}=\bigcup_{i\in\Sigma_j}\varphi_i(H_{\sigma|_p})$.

If $\#\Sigma_j=1$, then $H_{j\sigma_1\dots\sigma_p}\cap F=\varphi_j(H_{\sigma|_p})\cap F=\emptyset$. If $\#\Sigma_j>1$, we have $\varphi_i(H_{\sigma|_p})\cap\varphi_{i'}(H_{\sigma|_p})=\emptyset$ for any $i,i'\in\Sigma_j$ since $diam(H_{\sigma|_p})<\frac{1}{3}$. Hence $H_{j\sigma_1\dots\sigma_p}$ is not connected. So by Lemma \ref{another-z1}, there exists $\omega^*\in\Sigma_{j\sigma_1\dots\sigma_p}$ such that $\varphi_{\omega^*}(z_0)\notin F$ and it is a trivial point of $E$.

\medskip
Then by Lemma \ref{dim-increase}, the containing face of this trivial point has dimension no less than $r+1$. Inductively, we can finally obtain a trivial point $z^*$ whose containing face is $[0,1]^d$. The claim is proved.

Now suppose on the contrary that $E_k$ contains no $k$-th island for all $k\ge 1$. We will derive a contradiction. Let $z^*\in E\cap(0,1)^d$ be a trivial point. Let $U_k$ be the connected component of $E_k$ containing $z^*$, then we have $U_k\cap\partial[0,1]^d\ne\emptyset$. By the Weiestrass-Balzano property of the Hausdorff metric, there exists a subsequence $k_j$ such that $U_{k_j}$ converge. We denote $U^*$ to be the limit. On one hand, $U^*$ is connected since $U_{k_j}$ is connected for each $k_j$. On the other hand, $z^*\in U^*$ and $U^*\cap\partial[0,1]^d\ne\emptyset$. So $U^*$ is a non-trivial connected component of $E$ containing $z^*$, a contradiction. This together with Lemma \ref{leftarrow} finish the proof of the theorem.
\end{proof}

\begin{proof}[\textbf{Proof of Theorem \ref{dimdecrease}}]
First, let us assume $\dim\aff(E)=d$. Since $E$ contains a trivial point, by Theorem \ref{hasisland}, there exists $k>0$ such that $E_k$ contains a $k$-th island. Without lose of generality, suppose $E_1$ has an island $C$. Write $C=\bigcup_{j\in J}\varphi_j([0,1]^d)$, where $J\subset\Sigma$. Let $P$ be defined as \eqref{tildeF}. It has been proved in Lemma \ref{leftarrow} that every point in $P$ is a trivial point.

We denote $P^c=E\setminus P$. Let $\bD'=\bD\setminus\{d_j;\ j\in J\}$ and let $E'$ be the fractal cube determined by $n$ and $\bD'$. It is easy to see that
$$
P^c=\bigcup_{k=0}^\infty\bigcup_{\sigma_1\dots\sigma_k\in\Sigma^k,\sigma_k\in J}
\varphi_{\sigma_1\dots\sigma_k}(E')\subset\bigcup_{\sigma\in\Sigma^*}\varphi_\sigma(E').
$$
Consequently, $\dim_HP^c\le\dim_HE'=\frac{\log\#\bD'}{\log n}<\dim_HE$. Notice that $E\setminus\Lambda(E)\subset P^c$, we have $\mathcal{I}_c(E)=\dim_HE\setminus\Lambda(E)<\dim_HE$.

Next, assume that $\dim\aff(E)<d$. Then there exist $\alpha=(\alpha_1,\dots,\alpha_d)\in\mathbb{R}^d\setminus\{\mathbf{0}\}$ and $c\in\mathbb{R}$ such that
\begin{equation}\label{xalpha}
\langle x,\alpha\rangle=c,\quad \forall x\in E.
\end{equation}
Without loss of generality, we may assume that $\alpha_1\ne 0$. Since $\frac{x+h}{n}\in E$ for any $x\in E$ and any $h\in\bD$, we deduce that
\begin{equation}\label{halpha}
\langle h,\alpha\rangle=(n-1)c.
\end{equation}
Let $x=(x_1,\dots,x_d)\in\mathbb{R}^d$, we define a map by $\pi(x)=(x_2,\dots,x_d)$.
Denote $\widetilde{\bD}=\{\pi(h); h\in\bD\}$ and let $\widetilde{E}$ be the fractal cube determined by $n$ and $\widetilde{\bD}$. Define $g:\mathbb{R}^{d-1}\rightarrow\mathbb{R}^d$ by
$$
g(x_2,\dots,x_d)=(c-\langle \pi(x),\pi(\alpha)\rangle, \pi(x)).
$$
According to \eqref{xalpha} and \eqref{halpha}, one can show that $E=g(\widetilde{E})$. So we have $\mathcal{I}_c(E)=\mathcal{I}_c(\widetilde{E})$ and $\dim_HE=\dim_H\widetilde{E}$.

Therefore, by the first part of the proof and induction we have $\mathcal{I}_c(E)<\dim_HE$.
This finish the proof.
\end{proof}

\section{\textbf{Application to topological Hausdorff dimension}}

The topological Hausdorff dimension is defined as follows:

\begin{defn}[\cite{BBE15}]\label{def-1}
{\rm Let $X$ be a metric space. The topological Hausdorff dimension of $X$ is defined as
\begin{equation}\label{def-2}
\dim_{tH}X=\inf_{ \text{  ${\mathcal U}$ is a basis of $X$}}
\left ( 1+\sup_{U\in {\mathcal U}} \dim_H \partial U \right ),
\end{equation}
where $\dim_H\partial U$ denotes the Hausdorff dimension of the boundary of $U$ and we adopt the convention that
$dim_{tH}\emptyset=\dim_H \emptyset=-1.$
}
\end{defn}

The following theorem gives an alternative definition of the topological Hausdorff dimension.

\begin{thm}[Theorem 3.7 of \cite{BBE15}]\label{thm:origin}
For a non-empty $\sigma$-compact metric space $X$, it holds that
$$
\begin{array}{rl}
\dim_{tH} X=\min\{
h;~ & \exists S\subset X \text{ such that } \dim_H S\leq h-1\\
 &\text{ and } X\setminus S \text{ is totally disconnected}\}.
\end{array}
$$
\end{thm}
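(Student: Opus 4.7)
The plan is to prove both inequalities between $\dim_{tH} X$ and $h^* := \inf\{\dim_H S + 1 : S \subset X,\ X \setminus S \text{ totally disconnected}\}$; attainment of the infimum will fall out of the argument.

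For $h^* \leq \dim_{tH} X$, I would extract a witness $S$ directly from the definition of $\dim_{tH}$. For each $\varepsilon > 0$, pick a basis $\mathcal{U}_\varepsilon$ of $X$ with $\sup_{U \in \mathcal{U}_\varepsilon} \dim_H \partial U \leq \dim_{tH} X - 1 + \varepsilon$. Because $X$ is $\sigma$-compact metric, hence second countable and Lindel\"{o}f, I may thin $\mathcal{U}_\varepsilon$ to a countable subfamily that is still a basis. Let $S_\varepsilon := \bigcup_{U \in \mathcal{U}_\varepsilon} \partial U$; countable stability of Hausdorff dimension gives $\dim_H S_\varepsilon \leq \dim_{tH} X - 1 + \varepsilon$. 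The key observation is that for every $U \in \mathcal{U}_\varepsilon$, the trace $U \cap (X \setminus S_\varepsilon)$ is relatively clopen in $X \setminus S_\varepsilon$, because its relative boundary is contained in $\partial U \subset S_\varepsilon$. Since $\mathcal{U}_\varepsilon$ separates points of the Hausdorff space $X$, these traces separate points of $X \setminus S_\varepsilon$, which is therefore totally disconnected. Finally, $S := \bigcup_n S_{1/n}$ satisfies $\dim_H S \leq \dim_{tH} X - 1$, and $X \setminus S \subset X \setminus S_{1/n}$ stays totally disconnected for every $n$; this $S$ attains the infimum.

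For the reverse inequality $\dim_{tH} X \leq h^*$, given any admissible $S$ I would construct a basis $\mathcal{U}$ of $X$ with $\partial U \subset S$ for every $U \in \mathcal{U}$, which forces $\sup_U \dim_H \partial U \leq \dim_H S \leq h^* - 1$. The whole weight of the argument rests on a \emph{carving lemma}: for every $x \in X$ and every open neighborhood $V$ of $x$, produce an open $U$ with $x \in U \subset V$ and $\partial U \subset S$. My plan is to use the $\sigma$-compact decomposition $X = \bigcup_n K_n$ to localize to a compact piece and then exploit the total disconnectedness of $(X \setminus S) \cap K_n$: in a compact metric space quasi-components coincide with components, so around each $x \notin S$ one can isolate an arbitrarily small relatively clopen subset $A$ of $(X \setminus S) \cap K_n$, disjoint from $\partial V \cap (X \setminus S)$, and then realize $U$ as the open subset of $X$ whose closure adds only $S$-limit points to $A$, ensuring $\partial U \subset S$.

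\textbf{Main obstacle.} The carving lemma is the heart of the proof. Two subtleties must be addressed: (i) when $x \in S$ there is no direct clopen-in-$(X\setminus S)$ handle on $x$, so an approximation argument is needed, using that total disconnectedness of $X \setminus S$ forces $S$ to be sufficiently ``thick'' near $x$ to still produce neighborhoods with boundary in $S$; and (ii) even when $x \notin S$, lifting a relatively clopen subset of $X \setminus S$ to a genuine open subset of $X$ whose boundary lies entirely in $S$ requires a careful quasi-component / Sierpi\'{n}ski-type separation inside each compact piece, in order to rule out stray $X$-boundary points lying in $X \setminus S$. The non-locally-compact $\sigma$-compact case adds the further wrinkle that a compact $K_n$ need not contain an $X$-neighborhood of $x$, so the compact-piece construction must be combined with an exhaustion/limit argument across the $K_n$.
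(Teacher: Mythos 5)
The paper does not prove this statement at all: it is quoted verbatim as Theorem 3.7 of \cite{BBE15} and used as a black box, so there is no in-paper argument to compare yours against. Judging your attempt on its own terms: the easy inequality (that the infimum $h^*$ is at most $\dim_{tH}X$) is correct and standard, but your attainment step already fails. From $\dim_H S_{1/n}\le \dim_{tH}X-1+1/n$ and countable stability you get only $\dim_H\bigl(\bigcup_n S_{1/n}\bigr)=\sup_n \dim_H S_{1/n}\le \dim_{tH}X$, not $\le \dim_{tH}X-1$; nothing forces any single $S_{1/n}$, nor their union, below the threshold, so no minimizing $S$ is exhibited.

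The more serious problem is the carving lemma, which is not merely unproved but false under the stated hypotheses. If every $x\in X$ had arbitrarily small open neighbourhoods $U$ with $\partial U\subset S$, the traces $U\cap(X\setminus S)$ would form a clopen basis of $X\setminus S$, forcing $\dim_{top}(X\setminus S)\le 0$. But a totally disconnected subset of a compact metric space need not be zero-dimensional: Erd\H{o}s space is a totally disconnected separable metric space of topological dimension $1$, and it embeds in the Hilbert cube (indeed in $[0,1]^3$, by the N\"obeling embedding theorem). Taking $S$ to be the complement of such a copy gives an admissible $S$ (complement totally disconnected, ambient space compact) for which no basis with $\partial U\subset S$ can exist. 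Your appeal to ``quasi-components coincide with components in compact spaces'' does not repair this, since $(X\setminus S)\cap K_n$ is only a subset of a compact set, not itself compact. For these $S$ your argument yields nothing, while the theorem still requires $\dim_H S\ge \dim_{tH}X-1$; any correct proof of the hard inequality must obtain $\dim_{tH}X\le \dim_H S+1$ by a route that does not force the boundaries of basis elements to lie inside $S$. This is a genuine gap in the approach, not a missing detail.
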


\begin{proof}[\textbf{Proof of Theorem \ref{tHlesscH}}.]
Let $G=X\setminus\Lambda(X)$. Clearly $X\setminus G=\Lambda(X)$ is totally disconnected.
Let $t=\dim_{tH}G$. By Theorem \ref{thm:origin}, for any $\delta>0$, there exists $S\subset G$ such that $G\setminus S$ is totally disconnected, and
$$
\dim_H S+1<t+\delta.
$$
We can see that $X\setminus S=\Lambda(X)\cup(G\setminus S)$ is also totally disconnected; for otherwise there is a connected component of $E$ connecting a point $x\in\Lambda(X)$ and a point $y\in G\setminus S$. Again by Theorem \ref{thm:origin}, $\dim_{tH} X\leq \dim_H S+1<t+\delta$. Since $\delta$ is arbitrary, we have $\dim_{tH} X\le\dim_{tH} G$. Therefore,
$$
\dim_{tH}X\le\dim_{tH}G\le\dim_HG=\mathcal{I}_c(X).
$$
\end{proof}

\section{\textbf{Calculation of $\mathcal{I}_c(K)$ in Example \ref{examK}}}



We identify $\mathbb{R}^2$ with $\mathbb{C}$. Let $n=5$. Let $\bD=\{d_1,\dots,d_{14}\}$ be the digit set illustrated in Figure \ref{kuang} (a), denote $\Sigma=\{1,\dots,14\}$. Let $K$ be the fractal square determined by $n$ and $\bD$, and let $\{\varphi_j=\frac{z+d_j}{5}\}_{j\in\Sigma}$ be the IFS of $K$. Denote
\begin{equation}\label{JXY}
\begin{array}{lcl}
J_{XX}&=&\{j\in\Sigma;\ d_j\in\bD\setminus\{i,2i,3i\}\};\\
J_{XY}&=&\{j\in\Sigma;\ d_j\in\{i,2i,3i\}\};\\
J_{YX}&=&\{j\in\Sigma;\ d_j\in\bD\setminus\{i,2i,3i,4,4+4i\}\};\\
J_{YY}&=&\{j\in\Sigma;\ d_j\in\{i,2i,3i,4,4+4i\}\},
\end{array}
\end{equation}
see Figure \ref{K}. Let
$$
X=\left(\bigcup\limits_{j\in J_{XX}}\varphi_j(X)\right)\cup\left(\bigcup\limits_{j\in J_{XY}}\varphi_j(Y)\right),\quad
Y=\left(\bigcup\limits_{j\in J_{YX}}\varphi_j(X)\right)\cup\left(\bigcup\limits_{j\in J_{YY}}\varphi_j(Y)\right).
$$
Then $X$ and $Y$ are graph-directed sets (see \cite{MW88}). The directed graph $G$ is given in Figure \ref{G}.

\begin{figure}[H]
\subfigure[The first iteration of $X$.]{\includegraphics[width=4.5 cm]{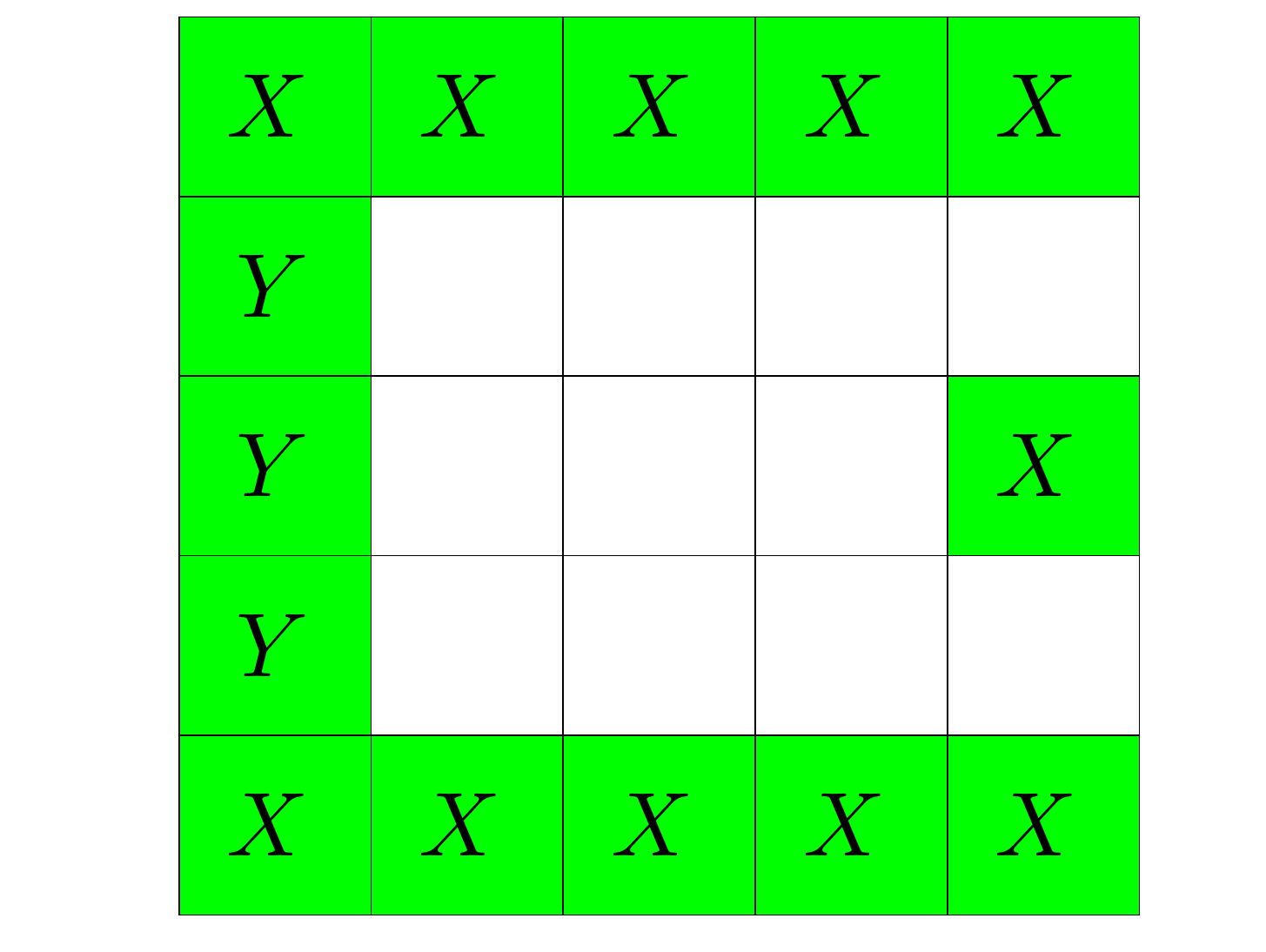}}\quad\quad
\subfigure[The first iteration of $Y$.]{\includegraphics[width=4.5 cm]{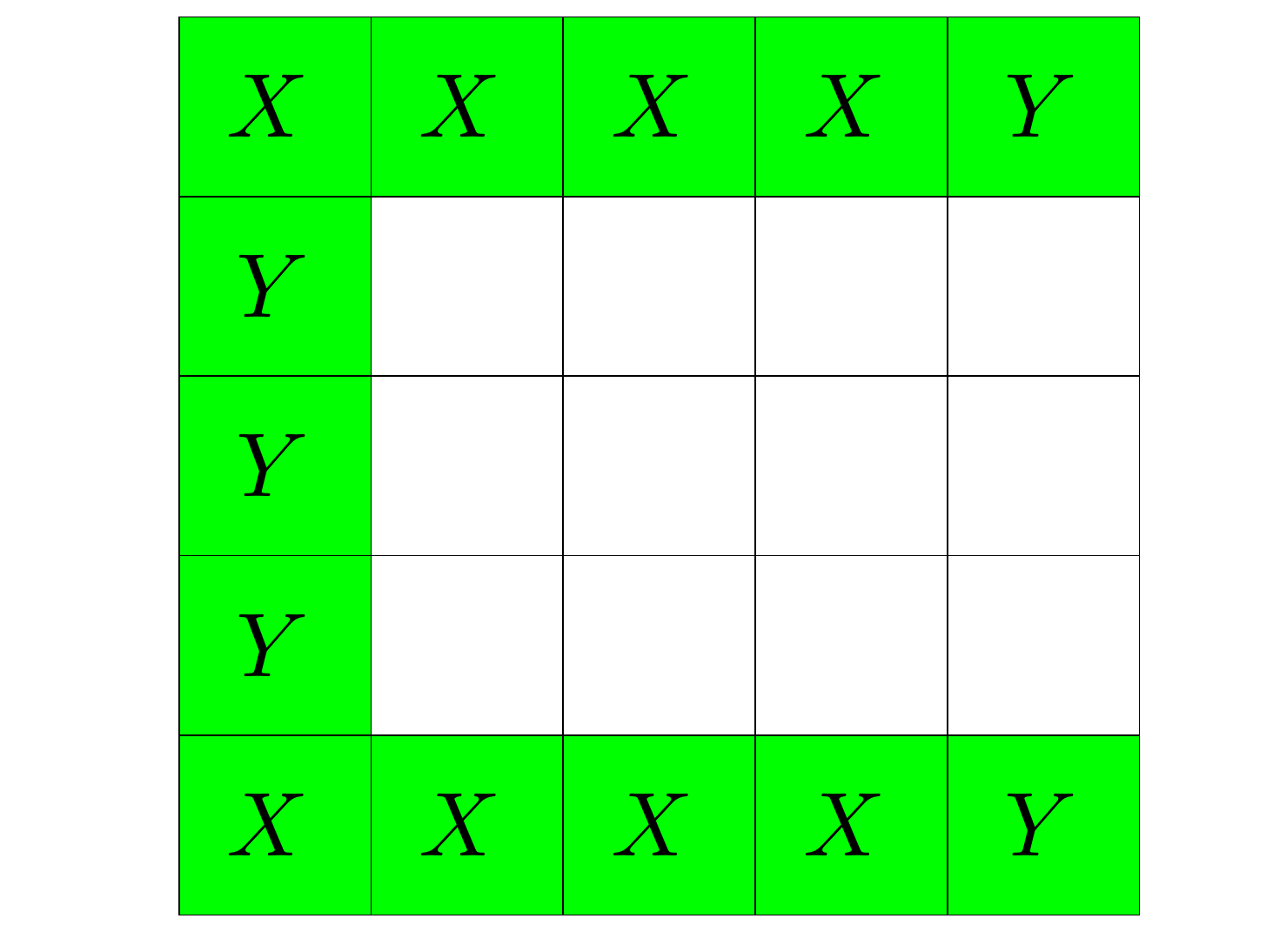}}
\caption{}
\label{K}
\end{figure}

\begin{figure}[H]
  \centering
  \includegraphics[width=6 cm]{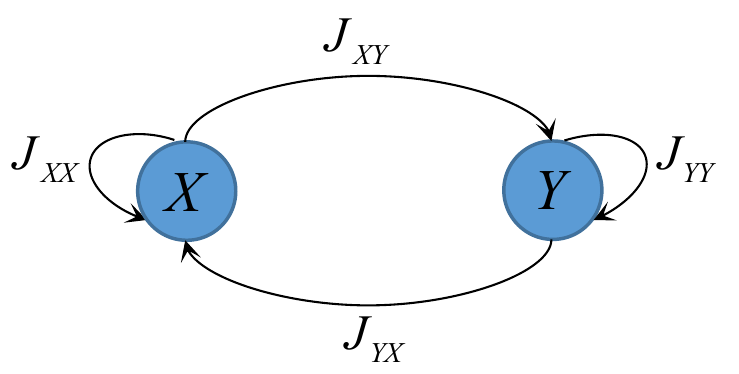}
\caption{The directed graph $G$. Each $d\in J_{XY}$ defined an edge from $X$ to $Y$, and the corresponding map of this edge is $(z+d)/5$. The same hold for $J_{XX}, J_{YX}$ and $J_{YY}$.}
\label{G}
\end{figure}

For each $\ell>0$, let $J_{YX}^{(\ell)}$ be the collection of paths with length $\ell$ which start from $Y$ and end at $X$ in the graph $G$. Similarly, we can define $J_{XX}^{(\ell)},J_{XY}^{(\ell)}$ and $J_{YY}^{(\ell)}$. Let $K_\ell=\bigcup\limits_{\sigma\in\Sigma^\ell}\varphi_\sigma([0,1]^2)$ and $Y_\ell=\bigcup\limits_{\sigma\in J_{YX}^{(\ell)}\cup J_{YY}^{(\ell)}}\varphi_\sigma([0,1]^2)$ be the be the \emph{$\ell$-th approximations} of $K$ and $Y$ respectively. Then $K=\bigcap\limits_{\ell>0}K_\ell$ and $Y=\bigcap\limits_{\ell>0}Y_\ell$.

\begin{lem}\label{lemmaK}
Let $C$ be the connected component of $K$ containing $\mathbf{0}$. Then\\
\indent\emph{(i)} $C=Y$;\\
\indent\emph{(ii)} for any non-trivial connected component $C'\ne C$ of $K$, there exists $\omega\in\Sigma^*$ such that $C'=\varphi_\omega(C)$.
\end{lem}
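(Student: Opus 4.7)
The plan is to derive part (i) by identifying $Y$ as the connected component of $K$ containing $\mathbf{0}$, and to deduce part (ii) from a scale argument combined with part (i). For part (i) I would establish both inclusions $Y\subset C$ and $C\subset Y$ separately.

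To prove $Y\subset C$, I would first check $\mathbf{0}\in Y$. Let $j_0\in\Sigma$ denote the index with $d_{j_0}=0$; then $j_0\in J_{YX}$ (since $0\notin\{i,2i,3i,4,4+4i\}$) and $j_0\in J_{XX}$ (since $0\notin\{i,2i,3i\}$), so the constant word $j_0j_0\cdots$ is a valid walk in $G$ starting at the vertex $Y$, and the cylinders $\varphi_{j_0}^n([0,1]^2)$ shrink to $\{\mathbf{0}\}$, hence $\mathbf{0}\in Y$. Next I would prove that $Y$ is connected by showing inductively that each approximation $Y_\ell$ is connected: the case $\ell=1$ is verified directly from Figure \ref{K}(b), and the inductive step invokes the graph-directed equations together with the observation that neighbouring cells inside $Y_\ell$ share boundary segments so as to preserve connectivity. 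Since $Y=\bigcap_\ell Y_\ell$ is a decreasing intersection of connected compact sets, $Y$ is connected, and combined with $\mathbf{0}\in Y$ this gives $Y\subset C$.

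For the reverse inclusion $C\subset Y$ I would strengthen the previous step to the claim that $Y_\ell$ is a connected component of $K_\ell$, not merely a connected subset. The decisive geometric fact is that cells of $K_1$ outside $Y_1$---those destined to host the other, trivial or rescaled-$C$, components of $K$---do not attach to $Y_1$ along any boundary piece that survives subsequent refinement. A case-by-case inspection at level one can then be propagated to higher levels by exploiting the set equation for $X$: whenever a type-$X$ cell inside $Y_\ell$ sits adjacent to a cell in $K_\ell\setminus Y_\ell$, the $J_{XY}$-subdivision inside that type-$X$ cell severs the offending attachment. Consequently the connected component of $\mathbf{0}$ in $K_\ell$ coincides with $Y_\ell$ at every scale, and intersecting over $\ell$ yields $C\subset Y$.

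For part (ii), let $C'$ be a non-trivial component of $K$ with $C'\ne C$. Since $\mathrm{diam}(C')>0$ while the cells $\varphi_\sigma([0,1]^2)$ have diameter tending to zero as $|\sigma|\to\infty$, there is a longest word $\omega\in\Sigma^*$ with $C'\subset\varphi_\omega([0,1]^2)$. By the maximality of $|\omega|$, the set $\varphi_\omega^{-1}(C')\subset K$ is a non-trivial connected subset of $K$ meeting at least two distinct level-one cells. Reapplying the component-of-$K_\ell$ argument of part (i) shows that the only such connected subset of $K$, up to rescaling by a prefix, is $C$ itself; hence $\varphi_\omega^{-1}(C')=C$ and $C'=\varphi_\omega(C)$. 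The main obstacle is the step in part (i) establishing that $Y_\ell$ is a connected component rather than merely a connected subset of $K_\ell$ for every $\ell$: the base case $\ell=1$ calls for a careful enumeration of the cells in Figure \ref{kuang}(a), verifying that every potential attachment of $Y_1$ to $K_1\setminus Y_1$ occurs at points cut away at the next iteration, and the induction then hinges on exploiting the graph-directed equations so that the sub-cell structure within type-$X$ cells continues to sever attachments, which is where the particular choice of $J_{XX},J_{XY},J_{YX},J_{YY}$ becomes essential.
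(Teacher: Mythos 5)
Your proposal follows essentially the same route as the paper: part (i) is proved by showing inductively that the connected component of $\mathbf{0}$ in the $\ell$-th approximation $K_\ell$ is exactly $Y_\ell$ (the paper organizes this induction with a label map recording whether a cell's right neighbour lies in the component, which is the precise form of your ``case-by-case inspection propagated by the graph-directed equations''), and part (ii) is obtained, as in the paper, by taking the longest word $\omega$ with $C'\subset\varphi_\omega(K)$ and observing that $\varphi_\omega^{-1}(C')$, being connected and meeting the intersection of two distinct first-level cells, must lie in $C$ (the paper states this as $\varphi_i(K)\cap\varphi_j(K)\subset C$ for $i\ne j$). The plan is correct; the only refinement worth making is to phrase the level-one verification as $C_1=Y_1$ outright rather than as attachments ``cut away at the next iteration,'' since the induction needs equality of components at every finite level.
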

\begin{proof}
(i) Let $C_\ell$ be the connected component of $K_\ell$ containing $\mathbf{0}$. We only need to show that $C_\ell=Y_\ell$ for all $\ell>0$. Now we define a label map $h$ on the cells in $C_\ell$ as follows. We set $h(\sigma_1\dots\sigma_\ell)=X$ if there exists $\omega_1\dots\omega_\ell\in\Sigma^\ell$ such that
\begin{equation}\label{hX}
\varphi_{\omega_1\dots\omega_\ell}([0,1]^2)=\varphi_{\sigma_1\dots\sigma_\ell}
([0,1]^2)+\frac{1}{n^\ell}\in C_\ell,
\end{equation}
otherwise set $h(\sigma_1\dots\sigma_\ell)=Y$. We will prove by induction that
\begin{equation}\label{YXYY}
\sigma_1\dots\sigma_\ell\in\left\{
\begin{array}{lr}
J_{YX}^{(\ell)}, &\text{if }h(\sigma_1\dots\sigma_\ell)=X,\\
J_{YY}^{(\ell)}, &\text{if }h(\sigma_1\dots\sigma_\ell)=Y.
\end{array}
\right.
\end{equation}

For $\ell=1$, \eqref{YXYY} holds by \eqref{JXY}. Assume that \eqref{YXYY} holds for $\ell$.

\medskip
\textit{Case 1. }$h(\sigma_1\dots\sigma_\ell)=X$.

In this case, \eqref{hX} holds, which means that the right neighbor of $\varphi_{\sigma_1\dots\sigma_\ell}([0,1]^2)$ belongs to $C_\ell$.
If $h(\sigma_1\dots\sigma_\ell\sigma_{\ell+1})=X$, then the right neighbor of $\varphi_{\sigma_1\dots\sigma_\ell\sigma_{\ell+1}}([0,1]^2)$ belongs to $C_{\ell+1}$ and we have $\sigma_{\ell+1}\in J_{XX}$. Hence $\sigma_1\dots\sigma_\ell\sigma_{\ell+1}\in J_{YX}^{(\ell+1)}$. Similarly, if $h(\sigma_1\dots\sigma_\ell\sigma_{\ell+1})=Y$, then $\sigma_{\ell+1}\in J_{XY}$ and $\sigma_1\dots\sigma_\ell\sigma_{\ell+1}\in J_{YY}^{(\ell+1)}$.



\medskip
\textit{Case 2. }$h(\sigma_1\dots\sigma_\ell)=Y$.

In this case, the right neighbor of $\varphi_{\sigma_1\dots\sigma_\ell}([0,1]^2)$ is not contained in $C_{\ell}$. By a similar argument as \emph{Case 1}, we have $\sigma_1\dots\sigma_\ell\sigma_{\ell+1}\in J_{YX}^{(\ell+1)}$ if $h(\sigma_1\dots\sigma_\ell\sigma_{\ell+1})=X$, and $\sigma_1\dots\sigma_\ell\sigma_{\ell+1}\in J_{YY}^{(\ell+1)}$ if $h(\sigma_1\dots\sigma_\ell\sigma_{\ell+1})=Y$.

Therefore, \eqref{YXYY} holds for $\ell+1$. Clearly, \eqref{YXYY} implies that $C_\ell=Y_\ell$. Statement (i) is proved.

(ii) Notice that $\varphi_i(K)\cap\varphi_j(K)\subset C$ for each $i,j\in\Sigma$ with $i\ne j$. Let $C'$ be a non-trivial connected component of $K$. Let $\omega$ be the longest word in $\Sigma^*$ such that $C'\subset\varphi_\omega(K)$. Then $\varphi_\omega^{-1}(C')\subset K$ and there exists $i,j\in\Sigma$ such that $\varphi_\omega^{-1}(C')\cap\varphi_i(K)\cap\varphi_j(K)\ne\emptyset$. It follows that $\varphi_\omega^{-1}(C')\subset C$, hence $C'\subset\varphi_\omega(C)$. Since $C'$ is a connected component, we have $C'=\varphi_\omega(C)$. Statement (ii) is proved.
\end{proof}

By Lemma \ref{lemmaK} we have $\mathcal{I}_c(K)=\dim_HC=\dim_HY=\frac{\log\lambda}{\log 5}$, where $\lambda=\frac{16+\sqrt{132}}{2}$ is the maximal eigenvalue of the matrix
$\begin{bmatrix} 11 & 8 \\ 3 & 5 \end{bmatrix}$.
Let $K'$ be the fractal square in Example \ref{examK}. It is obvious that $\mathcal{I}_c(K')=\frac{\log 13}{\log 5}$.


%
%


\end{document}